\theoremstyle{plain}
\newtheorem{theorem}{Theorem}[section]
\newtheorem{corollary}[theorem]{Corollary}
\newtheorem{definition}[theorem]{Definition}
\newtheorem{lemma}[theorem]{Lemma}
\newtheorem{proposition}[theorem]{Proposition}
\newtheorem{remark}[theorem]{Remark}
\numberwithin{equation}  {section}
\begin{document}

\title{Joint spectrum shrinking maps on projections}

\author{Wenhua Qian$^1$}
\address{1. School of Mathematical Sciences, Chongqing Normal University, Chongqing, China}

\author{Dandan Xiao$^1$}
\address{ Email: whqian86@163.com, 1258940058@qq.com, 1482408506@qq.com, wuwm@amss.ac.cn,}

\author{Tanghong Tao$^1$}

\address{1648024603@qq.com}

\author{Wenming Wu$^{1,\dag}$}

\author{Xin Yi$^1$}

\thanks{Qian is supported in part by the Natural Science Foundation of Chongqing Science and Technology Commission(cstc2020jcyj-msxmX0723) and
the Research Foundation of Chongqing Educational Committee (No. KJQN2021000529). Wu($^\dag$ corresponding author) is supported in part by NSF of China (No. 11871127, No. 11971463).}

\begin{abstract}
Let $\mathcal H$ be a finite dimensional complex Hilbert space with dimension $n \ge 3$ and $\mathcal P(\mathcal H)$ the set of projections on $\mathcal H$.
Let $\varphi: \mathcal P(\mathcal H) \to \mathcal P(\mathcal H)$ be a surjective map. We show that $\varphi$ shrinks the joint spectrum of any two projections
if and only if it is joint spectrum preserving for any two projections and thus is induced by a ring automorphism on $\mathbb C$ in a particular way.
In addition, for an arbitrary $k \ge 3$, $\varphi$ shrinks the joint spectrum of any $k$ projections if and only if it is induced by a unitary or an anti-unitary.
Assume that $\phi$ is a surjective map on the Grassmann space of rank one projections. We show that $\phi$ is joint spectrum preserving for any $n$ rank one projections
if and only if it can be extended to a surjective map on $\mathcal P(\mathcal{H})$ which is spectrum preserving for any two projections.
Moreover, for any $k >n$, $\phi$ is joint spectrum shrinking for any $k$ rank one projections if and only if it is induced by a unitary or an anti-unitary.
\end{abstract}

\subjclass[2010]{Primary 47B49; Secondary 47A25}
\keywords{joint spectrum preserving, joint spectrum shrinking, Kaplansky's problem, projections}


\maketitle

\section{Introduction}
The well-known Gleason-Kahane-\.{Z}elazko theorem (\cite{G, HZ}) states that a nonzero linear functional $\rho: \mathcal A \to \mathbb C$ on a unital complex Banach algebra $\mathcal A$
is an algebra homomorphism if and only if $\rho$ maps every element inside its spectrum. It is easy to verify that a nonzero linear functional $\rho$ on $\mathcal A$ is an algebra homomorphism
if and only if $\rho$ is a Jordan homomorphism, that is, $\rho(I) = 1$ where $I$ is the unit of $\mathcal{A}$ and $\rho$ preserves the squares. Motivated by this classical result,
in \cite{Ka2} Kaplansky asked whether a unital linear map $\varphi: \mathcal A \to \mathcal B$ between unital complex Banach algebras
which shrinks spectrum (i.e., $\sigma(\varphi(A)) \subseteq \sigma(A), \ \forall \ A \in \mathcal A$) is a Jordan homomorphism.
Notice that a unital linear map $\varphi: \mathcal A \to \mathcal B$ is spectrum shrinking if and only if it is invertibility preserving.

It is well-known that in general Kaplansky problem has a negative answer. A counterexample can be found in \cite{Au1}.
A lot of work has been done on Kaplansky Problem by additional assumptions (see \cite{Au3, HS} for some survey). Aupetit conjectured that Kaplansky Problem has a positive answer
when both Banach algebras are semi-simple and the map $\varphi$ is surjective and he confirmed this conjecture for von Neumann algebras \cite{Au2}. This problem is still open,
even for C$^*$-algebras \cite{BS, HK}. It was proved in \cite{CHNR} that the conjecture is true for C$^*$-algebras if in addition $\varphi$ is positive.
In particular, some related maps on matrix algebras are also considered \cite{FS, To}.

Recall that \cite{Ya} the joint spectrum of a tuple of operators $A_1,A_2,\ldots,A_l$ acting on a Hilbert space $\mathcal{H}$ is the set
 $$\sigma([A_1, \dots, A_l])=\{(c_1, \dots, c_l) \in \mathbb{C}^l: c_1A_1+\dots+c_lA_l ~\text{is not invertible in}~ \mathcal B(\mathcal H) \}.$$ It is an interesting issue to discuss the mapping which shrinks or preserves the joint spectrum of operators. It is easy to verify that a unital map $\varphi: \mathcal A \to \mathcal B$ is spectrum shrinking if and only if it shrinks the joint spectrum of the $2$-tuple $[I, A]$
for any element $A \in \mathcal A$. Therefore according to Aupetit's results \cite{Au2}, we can obtain the form of the mapping preserving the joint spectrum of any two operators in $\mathcal{B}(\mathcal{H})$.


In this paper we will characterize the mappings which shrink or preserve the joint spectrum of a tuple of projections.

Assume that $\mathcal H$ is a finite dimensional Hilbert space.
We first consider a surjective map $\varphi$ on the set $\mathcal P(\mathcal H)$ of projections on $\mathcal H$ which shrinks the joint spectrum of any two projections.
We first show that $\varphi$ leaves every Grassmann space invariant. By showing that the restriction of $\varphi$ on each Grassmann space is bijective,
we get that $\varphi$ is bijective. A mathematical induction gives that $\varphi$ is determined by its action on rank $n-1$ projections and as a consequence
we obtain that $\varphi$ is a lattice isomorphism which preserves the joint spectrum of any two projections. If $n =2$, it is easy to verify that
a surjective map $\varphi: \mathcal P(\mathcal H) \to \mathcal P(\mathcal H)$ is joint spectrum preserving for any two projections if and only if
$\varphi$ is bijective with $\varphi(I)=I, \varphi(0)=0$. If $n \ge 3$, some further calculations in Section 2 give the following result.

\vspace{0.2cm}
\begin{theorem} \label{main thm 1}
 Assume that $3 \le n (=\dim(\mathcal H)) < + \infty$ and $\varphi: \mathcal P(\mathcal H) \to \mathcal P(\mathcal H)$ is a surjective map.
Then the followings are equivalent.`
  \begin{enumerate}
 \item[(1)] $\varphi$ shrinks the joint spectrum of any two projections;
 \item[(2)] $\varphi$ preserves the joint spectrum of any two projections;
 \item[(3)] there exist an orthonormal basis $\alpha_1, \alpha_2, \dots, \alpha_n$, a basis $\beta_1, \beta_2, \dots, \beta_n$ for $\mathcal H$ and a ring automorphism
 $f$ of $\mathbb C$ such that
$$\varphi(P)(\mathcal H) = \{\hat{f}(P \xi): \xi \in \mathcal H\}, \forall \ P \in \mathcal P(\mathcal H),$$
where $\hat{f}: \mathcal H \to \mathcal H$ is induced by $f$ with
$$\hat{f}(z_1 \alpha_1 + z_2 \alpha_2 + \dots + z_n \alpha_n) = f(z_1) \beta_1 + f(z_2) \beta_2 + \dots + f(z_n)\beta_n.$$
\end{enumerate}
\end{theorem}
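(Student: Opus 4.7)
The plan is to establish the cycle $(3) \Rightarrow (2) \Rightarrow (1) \Rightarrow (3)$. The first two implications are straightforward: $(2) \Rightarrow (1)$ is the definition, and $(3) \Rightarrow (2)$ follows from observing that under the given form $c_1 \varphi(P) + c_2 \varphi(Q)$ corresponds via $\hat{f}$ to $f^{-1}(c_1) P + f^{-1}(c_2) Q$, and the joint spectrum of any two projections is a subset of the union of the three lines $\{c_1 = 0\}$, $\{c_2 = 0\}$, $\{c_1 + c_2 = 0\}$ (or all of $\mathbb{C}^2$), each of which is stable under the coordinatewise action of any ring automorphism of $\mathbb{C}$.

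The core work is $(1) \Rightarrow (3)$. I begin with the classical two-projection decomposition $\mathcal{H} = M_{00} \oplus M_{10} \oplus M_{01} \oplus M_{11} \oplus \bigoplus_i N_i$, where on $M_{ij}$ the pair $(P, Q)$ acts as the scalar pair $(i, j)$ and each $N_i$ is a two-dimensional summand on which $P, Q$ are rank-one projections meeting at an angle $\theta_i \in (0, \pi/2)$. A direct block computation gives
\begin{equation*}
\det(c_1 P + c_2 Q) = \Bigl(\prod_i \sin^2 \theta_i\Bigr) c_1^{\dim M_{10} + k} c_2^{\dim M_{01} + k} (c_1 + c_2)^{\dim M_{11}}
\end{equation*}
when $M_{00} = 0$ (where $k$ is the number of $N_i$ summands), and $\det \equiv 0$ when $M_{00} \neq 0$. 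Thus $\sigma([P, Q]) = \mathbb{C}^2$ iff $P \vee Q \neq I$, and otherwise $\sigma([P, Q])$ is the union of those three lines whose exponents are strictly positive. From this one reads off: (a) $\sigma([A, A]) \subseteq \{c_1 + c_2 = 0\}$ iff $A = I$, so $\varphi(I) = I$; (b) $\sigma([I, B]) \subseteq \{c_1 = 0\}$ iff $B = 0$, so applying shrinking to the pair $(I, 0)$ gives $\varphi(0) = 0$; (c) $\sigma([P, Q]) \subseteq \{c_1 c_2 = 0\}$ iff $P \vee Q = I$ and $P \wedge Q = 0$, so $\varphi$ preserves this ``generic complementary'' relation.

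The next and most delicate step is rank preservation: $\varphi$ must leave each Grassmann space $\mathcal{G}_r$ of rank-$r$ projections invariant. Since every generic complementary pair satisfies $\operatorname{rank}(P) + \operatorname{rank}(Q) = n$, the sum of ranks is preserved. To pin down individual ranks, I must distinguish orthogonal complementary pairs (where $Q = I - P$, equivalently $(P, Q)$ commute and all $N_i$ are trivial) from the broader class of generic complementary pairs with nontrivial angle components. This distinction is made by characterizing commutativity through joint-spectrum interactions with suitable auxiliary projections, exploiting surjectivity to produce preimages of specifically chosen test pairs. Once $\varphi(I - P) = I - \varphi(P)$ is established, I proceed by induction on rank---using that rank-$1$ projections are atoms in the order, detectable among complementary partners of rank-$(n-1)$ projections---to show $\varphi(\mathcal{G}_r) \subseteq \mathcal{G}_r$ for every $r$. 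Surjectivity and finite-dimensionality upgrade this to a bijection on each Grassmannian, and the order relation $\leq$ (hence the lattice operations $\wedge, \vee$) is then shown preserved, so $\varphi$ is a lattice automorphism of $\mathcal{P}(\mathcal{H})$.

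Finally, since $n = \dim \mathcal{H} \geq 3$, the classical Fundamental Theorem of Projective Geometry applies to the lattice automorphism $\varphi$ of the projective lattice $\mathcal{P}(\mathcal{H})$, producing a semilinear bijection $T: \mathcal{H} \to \mathcal{H}$ with associated ring automorphism $f$ of $\mathbb{C}$ such that $\varphi(P)(\mathcal{H}) = T(P(\mathcal{H}))$ for every $P$. Choosing any orthonormal basis $\alpha_1, \ldots, \alpha_n$ of $\mathcal{H}$ and setting $\beta_i = T\alpha_i$ yields $T = \hat{f}$ in these bases, producing exactly the description in (3). I anticipate the main obstacle to be the rank-preservation step, since the joint spectrum as a set does not itself encode rank: the argument must exploit both surjectivity and the multiplicity data implicitly available by combining several auxiliary pairs in order to isolate orthogonal complementation from the broader class of generic complementary pairs.
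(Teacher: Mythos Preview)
Your plan has a genuine gap at the rank-preservation step. You intend to establish $\varphi(I-P)=I-\varphi(P)$ by distinguishing orthogonal complementary pairs from merely generic complementary ones via joint-spectrum tests with auxiliary projections. But this is impossible: the maps described in (3) with a \emph{non-orthonormal} basis $\beta_1,\dots,\beta_n$ already satisfy (1) and (2) yet fail $\varphi(I-P)=I-\varphi(P)$. For instance, with $n=3$, $f=\mathrm{id}$, $\beta_1=\alpha_1$, $\beta_2=\alpha_2$, $\beta_3=\alpha_1+\alpha_3$, one gets $\varphi(P_{\alpha_1})=P_{\alpha_1}$ while $\varphi(I-P_{\alpha_1})$ is the projection onto $\mathrm{span}\{\alpha_2,\alpha_1+\alpha_3\}\neq I-P_{\alpha_1}$. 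So orthogonal complementation is simply not encoded by the joint-spectrum data you have access to, and no amount of auxiliary probing or surjectivity can recover it.

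The paper's route avoids orthogonality entirely. For rank preservation (its Lemma~2.4) it observes that if $P,Q\in\mathcal P_k$ with $r(P\wedge Q)=k-1$, one can build a \emph{single} projection $R$ that is generic complementary to both $P$ and $Q$ (take $R=P_{\alpha+\beta}+(I-P\vee Q)$ where $\alpha,\beta$ span $P\mathcal H,Q\mathcal H$ over $P\wedge Q$); then $r(\varphi(P))=n-r(\varphi(R))=r(\varphi(Q))$. Chaining these gives a permutation $g$ of $\{0,\dots,n\}$ with $\varphi(\mathcal P_k)\subseteq\mathcal P_{g(k)}$, and a short contradiction using only $P\vee Q=I\Rightarrow\varphi(P)\vee\varphi(Q)=I$ forces $g=\mathrm{id}$. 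From there the paper obtains bijectivity and the lattice automorphism via rank-$(n-1)$ projections (its Proposition~2.7, Corollary~2.8), then reconstructs $f$ by hand in coordinates; your appeal to the Fundamental Theorem of Projective Geometry is a legitimate shortcut once the lattice automorphism is secured. One smaller correction: in $(3)\Rightarrow(2)$, the operator $\varphi(P)$ is the orthogonal projection onto $\hat f(P\mathcal H)$, not $\hat f P\hat f^{-1}$, so $c_1\varphi(P)+c_2\varphi(Q)$ does not correspond to $f^{-1}(c_1)P+f^{-1}(c_2)Q$; the clean argument is that $\hat f$ preserves $\vee,\wedge,0,I$ and rank, and $\sigma([P,Q])$ is determined by whether $P\vee Q=I$ and $P\wedge Q=0$.
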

\vspace{0.2cm}

Moreover, we consider a surjective map $\varphi$ on the set $\mathcal P(\mathcal H)$
which shrinks the joint spectrum of any $k(\geq 3)$ projections.
We will further prove that $\varphi$ preserves the orthogonality of projections (i.e., $PQ=0$ if and only if $\varphi(P)\varphi(Q)=0$) and obtain the following equivalent characterizations.

\vspace{0.2cm}
\begin{theorem} \label{thm 3.3}
Assume that $3 \le n (=\dim(\mathcal H)) < + \infty$ and $\varphi: \mathcal P(\mathcal H) \to \mathcal P(\mathcal H)$ is a surjective map.
Then the followings are equivalent.
\begin{enumerate}
\item[(1)] there exists $k_0 \ge 3$ such that $\varphi$ shrinks the joint spectrum of any $k_0$ projections;
\item[(2)] there exists $k_0 \ge 3$ such that $\varphi$ preserves the joint spectrum of any $k_0$ projections;
\item[(3)] for any $k \ge 3$, $\varphi$ shrinks the joint spectrum of any $k$ projections;
\item[(4)] for any $k \ge 3$, $\varphi$ preserves the joint spectrum of any $k$ projections;
\item[(5)] there exists a unitary or anti-unitary $U$ such that $\varphi(P) = U^*PU, \ \forall \ P \in \mathcal P(\mathcal H)$.
\end{enumerate}
\end{theorem}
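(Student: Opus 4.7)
The implications $(5)\Rightarrow(4)\Rightarrow(3)\Rightarrow(1)$ and $(4)\Rightarrow(2)\Rightarrow(1)$ are routine: a unitary or anti-unitary conjugation preserves the joint spectrum of a tuple of self-adjoints (using, in the anti-unitary case, that such joint spectra are conjugation-symmetric), preservation trivially implies shrinking, and universal conditions imply existential ones. The substantive direction is $(1)\Rightarrow(5)$, which I sketch in three steps below.

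First, I reduce to 2-tuple shrinking. Given $k_0\ge 3$, feeding the $k_0$-tuple $(P,Q,Q,\ldots,Q)$ into the hypothesis makes $c_1P+(c_2+\cdots+c_{k_0})Q$ an arbitrary linear combination of $P,Q$, so $\sigma([\varphi(P),\varphi(Q)])\subseteq\sigma([P,Q])$. Theorem~\ref{main thm 1} then provides an orthonormal basis $\alpha_1,\ldots,\alpha_n$, a basis $\beta_1,\ldots,\beta_n$, and a ring automorphism $f$ of $\mathbb C$ such that $\varphi(P)(\mathcal H)=\hat f(P\mathcal H)$; in particular $\varphi$ is a rank-preserving bijection with $\varphi(I)=I$.

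Second, I use the full strength of $k_0\ge 3$. Feeding $(P,Q,I,I,\ldots,I)$ into the hypothesis and observing that $c_1P+c_2Q+(c_3+\cdots+c_{k_0})I$ is singular exactly when $-(c_3+\cdots+c_{k_0})\in\sigma(c_1P+c_2Q)$, I obtain
\[
\sigma\bigl(\varphi(P)+\varphi(Q)\bigr)\subseteq\sigma(P+Q)\qquad\text{for all projections }P,Q.
\]
If $PQ=0$ then $\sigma(P+Q)\subseteq\{0,1\}$; since $\varphi(P)+\varphi(Q)$ is positive with spectrum in $[0,2]$, it is then a projection, so $\varphi(P)\varphi(Q)=0$. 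Via the range formula this is the one-sided implication $\xi\perp\eta\Rightarrow\hat f(\xi)\perp\hat f(\eta)$.

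Third, I pin down $f$ and the $\beta_i$. Applied to $\alpha_i\perp\alpha_j$, the implication gives $\beta_i\perp\beta_j$; applied to $\xi=\alpha_i+z\alpha_j$, $\eta=-\bar z\alpha_i+\alpha_j$ (orthogonal for every $z\in\mathbb C$), it yields $f(z)\|\beta_j\|^2=\overline{f(\bar z)}\|\beta_i\|^2$. Setting $z=1$ equates the norms of the $\beta_i$, and a global rescaling (which leaves $\varphi$ unchanged) then makes them orthonormal and simplifies the identity to $f(z)=\overline{f(\bar z)}$. So $f$ restricts to a ring automorphism of $\mathbb R$; because positivity on $\mathbb R$ is characterised by being a square, any such automorphism is order-preserving and hence the identity. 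Together with $f(i)^2=-1$, this forces $f=\mathrm{id}$ or $f$ to be complex conjugation. Either way $\hat f$ carries $\{\alpha_i\}$ to $\{\beta_i\}$ linearly or conjugate-linearly, so coincides with a unitary or anti-unitary operator $V$; then $\varphi(P)=VPV^*=U^*PU$ with $U=V^*$, which is $(5)$. The main obstacle is the second step: extracting the spectral inclusion $\sigma(\varphi(P)+\varphi(Q))\subseteq\sigma(P+Q)$ from the $k_0$-tuple hypothesis and leveraging it to obtain the one-sided orthogonality preservation; after that, the ring-automorphism analysis is a short calculation.
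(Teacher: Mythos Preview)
Your argument is correct, and the overall architecture---reduce to pairs, invoke Theorem~\ref{main thm 1}, prove orthogonality preservation, then pin down $f$---matches the paper. The differences lie in how the two middle steps are executed.

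For orthogonality preservation (your Step~2), the paper's Lemma~\ref{lem 3.1} argues by contradiction: assuming $PQ=0$ but $\varphi(P)\varphi(Q)\neq 0$, it first reduces to $P+Q=I$, then manufactures a specific rank-one projection $R$ (with $\xi+\varphi(Q)\xi$ in its range) and a scalar $\lambda=\tfrac{1+3c^2}{1+c^2}>1$ such that $(1,1,-\lambda)$ lies in $\sigma([\varphi(P),\varphi(Q),R])$ but not in $\sigma([P,Q,\varphi^{-1}(R)])$. Your device of padding with copies of $I$ to extract $\sigma(\varphi(P)+\varphi(Q))\subseteq\sigma(P+Q)$ is more streamlined and avoids the ad hoc construction; it also yields the stronger spectral inclusion for all linear combinations $c_1P+c_2Q$, not just orthogonality. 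The paper's route, on the other hand, never needs $\varphi(I)=I$ at that stage and works directly with a $3$-tuple of genuine projections.

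For identifying $f$ (your Step~3), the paper's Lemma~\ref{lem 3.2} uses the orthogonal pair $t\alpha_1+\sqrt{t}\,\alpha_2\perp \alpha_1-\sqrt{t}\,\alpha_2$ to obtain $f(t)=|f(\sqrt{t})|^2\ge 0$ for $t\ge 0$, and then squeezes by rationals. You instead use $\alpha_i+z\alpha_j\perp -\bar z\,\alpha_i+\alpha_j$ to get $f(z)=\overline{f(\bar z)}$, which immediately shows $f(\mathbb R)\subseteq\mathbb R$ and lets you invoke the standard fact that ring automorphisms of $\mathbb R$ are trivial. The two arguments are equivalent in strength; yours is perhaps a shade more conceptual, while the paper's is more self-contained.
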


\vspace{0.2cm}

We also investigate a surjective map $\phi$ on the set $\mathcal P_1(\mathcal H)$ of rank one projections which preserves the joint spectrum of any $n$ rank one projections.
 It is shown that $\phi$ is order-preserving in the sense that $\phi(P) \le \phi(P_1) \vee \phi(P_2) \vee \dots \vee \phi(P_k)$ if and only if $P \le P_1 \vee P_2 \vee \dots \vee P_k$
for any positive integer $k$ and any $P, P_1, \dots, P_k \in \mathcal P_1(\mathcal H)$. It follows that $\phi$ can be extended  to a surjective map on the set $\mathcal P(\mathcal H)$
of projections on $\mathcal H$ which preserves the joint spectrum of any two projections and we obtain the following result.

\vspace{0.2cm}

\begin{theorem} \label{main thm 2}
Assume that $3 \le n (=\dim(\mathcal H)) < + \infty$ and $\phi: \mathcal P_1 (\mathcal H) \to \mathcal P_1 (\mathcal H)$ is a surjective map.
Then the followings are equivalent.
 \begin{enumerate}
\item[(1)] $\phi$ preserves the joint spectrum of any $n$ rank one projections;
\item[(2)] there exist an orthonormal basis $\alpha_1, \alpha_2, \dots, \alpha_n$, a basis $\beta_1, \beta_2, \dots, \beta_n$ for $\mathcal H$ and a ring automorphism $f$
of $\mathbb C$ such that
$$\phi(P)(\mathcal H) = \{\hat{f}(P \xi): \xi \in \mathcal H\}, \forall \ P \in \mathcal P_1(\mathcal H),$$
where $\hat{f}: \mathcal H \to \mathcal H$ is induced by $f$ as in Theorem \ref{main thm 1}.
\end{enumerate}
\end{theorem}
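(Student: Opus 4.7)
The plan is to reduce Theorem \ref{main thm 2} to Theorem \ref{main thm 1}. The implication $(2) \Rightarrow (1)$ is immediate: if $\phi$ has the form in $(2)$, then the semilinear bijection $\hat f$ preserves $\mathbb C$-linear (in)dependence (a nontrivial relation $\sum c_i \hat f(\xi_i) = 0$ is equivalent to $\sum f^{-1}(c_i)\xi_i = 0$ since $f$ is a ring automorphism), so $\phi$ preserves the linear independence of range vectors of $n$-tuples of rank one projections, which via the analysis below is equivalent to joint-spectrum preservation. So the content is $(1) \Rightarrow (2)$: I will extend $\phi$ to a lattice automorphism of $\mathcal P(\mathcal H)$ that preserves the joint spectrum of any two projections, then invoke Theorem \ref{main thm 1}.

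The first step is to describe the joint spectrum of $n$ rank one projections concretely. Writing $P_i = \xi_i \xi_i^*$ and $X = [\xi_1\ \cdots\ \xi_n]$, the identity $\sum c_i P_i = X\operatorname{diag}(c_1, \dots, c_n) X^*$ gives $\det(\sum c_i P_i) = |\det X|^2 \prod c_i$. Hence $\sigma([P_1, \dots, P_n])$ equals $\mathbb C^n$ if $\xi_1, \dots, \xi_n$ are linearly dependent, and $\bigcup_{i=1}^n \{c : c_i = 0\}$ otherwise. Thus hypothesis $(1)$ is equivalent to $\phi$ preserving the linear independence of range vectors of $n$-tuples of rank one projections. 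Using surjectivity and the extend-to-basis principle, this upgrades to preservation of linear independence of all $m$-tuples with $m \le n$: an $m$-tuple is independent iff it extends to an independent $n$-tuple, and surjectivity lets us freely choose the $\phi$-images of the extending rank one projections. Order preservation $P \le P_1 \vee \cdots \vee P_k \iff \phi(P) \le \phi(P_1) \vee \cdots \vee \phi(P_k)$ then follows from the characterization that $\xi \in \operatorname{span}(\xi_1, \dots, \xi_k)$ iff the $(r+1)$-tuple $\{\xi, \xi_{i_1}, \dots, \xi_{i_r}\}$ is linearly dependent for some (equivalently every) maximal independent subset $\{\xi_{i_1}, \dots, \xi_{i_r}\}$ of $\{\xi_1, \dots, \xi_k\}$. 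Injectivity of $\phi$ is an immediate consequence.

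Now I define $\tilde\phi(Q) = \bigvee \{\phi(P) : P \in \mathcal P_1(\mathcal H),\ P \le Q\}$; order preservation of $\phi$ (applied to both $\phi$ and $\phi^{-1}$) upgrades this to a bijective lattice automorphism of $\mathcal P(\mathcal H)$ extending $\phi$. The crucial step is to verify that $\tilde\phi$ preserves the joint spectrum of any two projections. By Halmos' two-projection decomposition, on each $2 \times 2$ generic block the determinant of $aQ_1 + bQ_2$ equals $ab\sin^2\theta$ for a principal angle $\theta \in (0, \pi/2)$, vanishing only when $ab = 0$; on the pieces $M_1 \cap M_2$, $M_1 \cap M_2^\perp$, $M_1^\perp \cap M_2$, $M_1^\perp \cap M_2^\perp$ the operator acts as $(a+b)I$, $aI$, $bI$, $0$ respectively. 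Consequently $\sigma([Q_1, Q_2])$ is always contained in $\{a = 0\} \cup \{b = 0\} \cup \{a+b = 0\}$, with equality to $\mathbb C^2$ precisely when $M_1 + M_2 \neq \mathcal H$; and the presence of each of these three lines is determined by the invariants $\operatorname{rank}(Q_1)$, $\operatorname{rank}(Q_2)$, and $\dim(M_1 \cap M_2)$ — all lattice-theoretic data preserved by $\tilde\phi$. Hence $\tilde\phi$ preserves the joint spectrum of any two projections, and Theorem \ref{main thm 1} gives the desired form for $\tilde\phi$; restricting to $\mathcal P_1(\mathcal H)$ yields the formula in $(2)$.

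The main obstacle is the third step above: recognizing $\sigma([Q_1, Q_2])$ as a purely lattice-theoretic invariant. It hinges on the Halmos decomposition together with the explicit calculation showing that the principal-angle data cancels from the determinant, so metric (angular) information never appears in the joint spectrum of a pair of projections. The order-preservation step is also delicate, requiring careful use of the surjectivity of $\phi$ to transfer the $n$-tuple independence statement to statements about shorter tuples and then to containment in joins.
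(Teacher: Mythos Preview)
Your proposal is correct and follows essentially the same architecture as the paper: characterize $\sigma([P_1,\dots,P_n])$ for rank one projections (the paper's Lemma~4.1), deduce that $\phi$ preserves linear independence of $m$-tuples and hence order (Lemmas~4.2--4.4), extend $\phi$ to a bijective lattice automorphism $\tilde\phi$ on $\mathcal P(\mathcal H)$, verify that $\tilde\phi$ preserves the joint spectrum of any two projections, and invoke Theorem~\ref{main thm 1}. The only substantive difference is in that last verification: the paper appeals to the characterization from \cite{WJRQ}, reducing to the two conditions ``$P\vee Q=I$'' and ``$P\vee Q=I,\ P\wedge Q=0$'', while you compute the joint spectrum of a pair directly via the Halmos two-projection decomposition and observe that it depends only on the lattice data $\operatorname{rank}(Q_1)$, $\operatorname{rank}(Q_2)$, $\dim(M_1\cap M_2)$. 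Your route is self-contained and makes explicit why the principal angles drop out of $\sigma([Q_1,Q_2])$; the paper's route is shorter but relies on the external reference. One small wording issue: your sentence ``$\sigma([Q_1,Q_2])$ is always contained in $\{a=0\}\cup\{b=0\}\cup\{a+b=0\}$'' is literally false when $M_1+M_2\neq\mathcal H$ (where $\sigma=\mathbb C^2$); you clearly mean the containment holds in the complementary case, so just rephrase.
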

\vspace{0.2cm}

Moreover, if $\phi:  \mathcal P_1(\mathcal H) \to \mathcal P_1(\mathcal H)$ is surjective and shrinks the joint spectrum of any $n+1$ projections, we can show that
$\phi$ preserves the orthogonality of projections and obtain the following theorem.

\vspace{0.2cm}

\begin{theorem}\label{thm 4.14}
Assume that $3 \le n (=\dim(\mathcal H)) < + \infty$ and $\phi: \mathcal P_1(\mathcal H) \to \mathcal P_1(\mathcal H)$ is a surjective map.
Then the followings are equivalent.
\begin{enumerate}
\item[(1)] there exists $k_0 \ge n+1$ such that $\phi$ shrinks the joint spectrum of any $k_0$ projections;
\item[(2)] there exists $k_0 \ge n+1$ such that $\phi$ preserves the joint spectrum of any $k_0$ projections;
\item[(3)] for any $k \ge n+1$, $\phi$ shrinks the joint spectrum of any $k$ projections;
\item[(4)] for any $k \ge n+1$, $\phi$ preserves the joint spectrum of any $k$ projections;
\item[(5)] there exist a unitary or anti-unitary $U$ such that $\phi(P) = U^*PU, \ \forall \ P \in \mathcal P_1(\mathcal H)$.
\end{enumerate}
\end{theorem}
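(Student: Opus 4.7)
The implications $(5)\Rightarrow(4)\Rightarrow(3)\Rightarrow(1)$ and $(4)\Rightarrow(2)\Rightarrow(1)$ are immediate, since unitary or anti-unitary conjugation gives $U^*(\sum c_iP_i)U=\sum c_iU^*P_iU$ and thus preserves the joint spectrum of any tuple, while preservation trivially implies shrinking. The substantive direction is $(1)\Rightarrow(5)$; the strategy is to deduce orthogonality preservation and then invoke Uhlhorn's theorem.

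First, a padding reduction: if $\phi$ shrinks the joint spectrum of every $k_0$-tuple, then it shrinks the joint spectrum of every $\ell$-tuple for $\ell\le k_0$ --- given $(P_1,\dots,P_\ell)$, append $k_0-\ell$ copies of $P_\ell$ and compare the $k_0$-joint spectra at $(c_1,\dots,c_\ell,0,\dots,0)$. In particular $\phi$ shrinks the joint spectrum of any $n+1$ rank one projections. Writing $P_i=e_ie_i^*$ and applying the Cauchy--Binet formula yields
\[
\det\!\left(\sum_{i=1}^{n+1}c_iP_i\right)=\sum_{|S|=n}\bigl|\det[e_j]_{j\in S}\bigr|^2\prod_{i\in S}c_i,
\]
so $\sigma([P_1,\dots,P_{n+1}])$ is the zero locus of a degree-$n$ polynomial with nonnegative real coefficients whose monomials are indexed by the $n$-subsets of $\{1,\dots,n+1\}$. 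Evaluating at the indicator vector $\chi_S$ converts the shrinking inclusion into the combinatorial statement: $\{P_i\}_{i\in S}$ spans $\mathcal H$ implies $\{\phi(P_i)\}_{i\in S}$ spans $\mathcal H$. Now fix $P_1,\dots,P_n$ to be an orthonormal system and let $Q=qq^*$ vary; the matrix determinant lemma reduces the polynomial to $\prod_i c_i+c_{n+1}\sum_j|\langle q,e_j\rangle|^2\prod_{i\ne j}c_i$, whose missing monomials correspond precisely to the indices $j$ with $Q\perp P_j$. Comparing this against the analogous polynomial for $(\phi(P_1),\dots,\phi(P_n),\phi(Q))$ and using surjectivity to exchange the roles of originals and images yields the biconditional $PQ=0\Leftrightarrow\phi(P)\phi(Q)=0$.

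Once orthogonality is preserved, $\phi$ is automatically injective: if $\phi(P)=\phi(Q)$, then $P$ and $Q$ have identical orthogonal partners in $\mathcal P_1(\mathcal H)$, which in dimension at least three forces $P=Q$. Then $\phi$ is a bijection of $\mathcal P_1(\mathcal H)$ preserving orthogonality, so by Uhlhorn's theorem there is a unitary or anti-unitary $U$ with $\phi(P)=U^*PU$, establishing $(5)$. I expect the main obstacle to be the orthogonality extraction in the middle paragraph: moving from the one-sided inclusion of Cauchy--Binet zero sets to the full biconditional requires a careful choice of test tuples and a controlled use of surjectivity, and this is precisely where the threshold $k_0\ge n+1$ enters, since fewer projections cannot detect spanning in $n$-dimensional space.
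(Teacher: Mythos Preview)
Your overall architecture is sound---the easy implications, the padding reduction to $k_0=n+1$, and the final appeal to Uhlhorn's theorem are all correct---but the middle paragraph, which you yourself flag as the main obstacle, has a genuine gap that ``surjectivity'' does not close.

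Your Cauchy--Binet computation and the evaluation at indicator vectors $\chi_S$ are correct and yield precisely the implication
\[
\{P_i\}_{i\in S}\ \text{spans}\ \mathcal H \ \Longrightarrow\ \{\phi(P_i)\}_{i\in S}\ \text{spans}\ \mathcal H
\]
for every $n$-subset $S$. With $P_1,\dots,P_n$ an orthonormal basis and $Q$ arbitrary, this reads: $Q\not\perp P_j \Rightarrow \phi(Q)\not\le \bigvee_{i\ne j}\phi(P_i)$. But the right-hand side is \emph{not} ``$\phi(Q)\not\perp\phi(P_j)$'', because you have no control over the geometry of $\phi(P_1),\dots,\phi(P_n)$; they span, but need not be orthogonal. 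So neither direction of $PQ=0\Leftrightarrow\phi(P)\phi(Q)=0$ follows. The phrase ``use surjectivity to exchange the roles of originals and images'' would require that $\phi^{-1}$ also shrinks joint spectra, and that is exactly what is \emph{not} given under hypothesis (1); bijectivity of $\phi$ (which you do get) is not enough.

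The paper closes this gap by a completely different device (its Lemma 4.10). Assuming $PQ=0$ but $\phi(P)\phi(Q)\ne 0$, it picks a unit vector $\xi$ in the range of $\phi(P)$ with $\phi(Q)\xi\ne 0$, sets $c=\|\phi(Q)\xi\|$, takes $R$ the rank-one projection onto $\xi+\phi(Q)\xi$, and checks that $(\phi(P)+\phi(Q)-\lambda R)\xi=0$ with $\lambda=\tfrac{1+3c^2}{1+c^2}>1$. Extending $P,Q$ to an orthonormal basis $P,Q,P_3,\dots,P_n$, the $(n+1)$-tuple $(P,Q,\phi^{-1}(R),P_3,\dots,P_n)$ at coefficients $(1,1,-\lambda,1,\dots,1)$ gives $I-\lambda\,\phi^{-1}(R)$ on the original side (invertible since $\lambda\ne 1$) while the $\phi$-image has rank at most $n-1$, contradicting shrinking. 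This is the missing idea: one must manufacture a specific point of the joint spectrum on the image side that is provably absent on the original side, and the indicator vectors $\chi_S$ are too crude for this. After orthogonality is established, your route via Uhlhorn is a perfectly good (and arguably cleaner) alternative to the paper's extension to all of $\mathcal P(\mathcal H)$.
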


\section{Maps shrinking the joint spectrum of any two projections}
Let $\mathcal H$ be a Hilbert space with dimension $n < +\infty$. Denote by $\mathcal P(\mathcal H)$ and $\mathcal{P}_r(\mathcal H)$ (i.e., the order $r$ Grassmann space) the set of projections and the set of rank $r$ projections on $\mathcal H$.
In this section we assume that $\varphi : \mathcal P(\mathcal H) \to \mathcal P(\mathcal H)$ is a surjective map which shrinks the joint spectrum of any two projections,
i.e., $\sigma([\varphi(P), \varphi(Q)]) \subseteq \sigma([P,Q]), \ \forall \ P, Q \in \mathcal P(\mathcal H)$.

\begin{lemma} \label{lem 2.1}
$\varphi(I)= I, \varphi(0)=0$.
\end{lemma}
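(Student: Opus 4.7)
The plan is to exploit the shrinking property on the two simplest possible pairs, $(I,I)$ and $(I,0)$, and pin down $\varphi(I)$ and $\varphi(0)$ by comparing the explicit joint spectra of pencils of the form $c_1 R + c_2 R'$ with projections $R,R'$. The elementary fact I will repeatedly use is that in a finite dimensional space a projection is invertible if and only if it equals $I$, and that the pencil $c_1 R + c_2 R'$ for commuting projections is diagonalisable with eigenvalues determined by the four joint spectral subspaces.

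To show $\varphi(I)=I$, I would set $P=Q=I$. Since $c_1 I + c_2 I = (c_1+c_2)I$ is invertible exactly when $c_1+c_2 \neq 0$, one has $\sigma([I,I])=\{(c_1,c_2)\in\mathbb{C}^2 : c_1+c_2=0\}$, a complex line. Writing $R:=\varphi(I)$, the pencil $c_1R+c_2R=(c_1+c_2)R$ is invertible iff both $c_1+c_2 \neq 0$ and $R$ is invertible; thus $\sigma([R,R])$ is the same line $\{c_1+c_2=0\}$ if $R=I$, and is all of $\mathbb{C}^2$ if $R\neq I$. The shrinking inclusion $\sigma([R,R])\subseteq\sigma([I,I])$ then forces $R=I$.

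Using $\varphi(I)=I$, I would next apply shrinking to the pair $(I,0)$. Clearly $\sigma([I,0])=\{(c_1,c_2):c_1=0\}$. Set $R':=\varphi(0)$. The pencil $c_1I+c_2R'$ acts as $c_1\,\mathrm{id}$ on $\ker R'$ and as $(c_1+c_2)\,\mathrm{id}$ on $\mathrm{ran}\,R'$, so its spectrum is $\{c_1\}\cup\{c_1+c_2\}$, with the first summand dropped when $R'=I$ and the second dropped when $R'=0$. Consequently $\sigma([I,R'])$ is $\{c_1=0\}$ if $R'=0$, is $\{c_1+c_2=0\}$ if $R'=I$, and is the union of these two lines for any proper projection $R'$. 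Only the first option is contained in $\sigma([I,0])=\{c_1=0\}$, so $\varphi(0)=0$.

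The calculation is entirely routine and there is no genuine obstacle; notably, neither the surjectivity of $\varphi$ nor the dimension hypothesis $n\ge 3$ is needed at this point. The only slightly delicate observation is that, while the inclusion $\sigma([R,R])\subseteq\sigma([I,I])$ looks weak because the right-hand side is already a proper subset of $\mathbb{C}^2$, the left-hand side jumps discontinuously from a line to the whole plane as soon as $R\neq I$, and it is precisely this dichotomy (invertibility versus non-invertibility of the projection) that makes the argument go through.
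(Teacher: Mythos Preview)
Your proof is correct and follows essentially the same approach as the paper: both apply the shrinking hypothesis to the simplest pairs involving $I$ and $0$ and exploit that a projection is invertible iff it equals $I$ (and $I-P$ is invertible iff $P=0$). The paper argues with single witness points---$(1,0)\notin\sigma([I,Q])$ to get $\varphi(I)=I$ and $(1,-1)\notin\sigma([I,0])$ to get $\varphi(0)=0$---rather than computing the full joint spectra, and it invokes surjectivity in the first step, an invocation you correctly observe is unnecessary.
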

\begin{proof}
For any $Q \in \mathcal P(\mathcal H)$, $(1, 0) \notin \sigma([I, Q])$. Hence $(1, 0) \notin \sigma([\varphi(I), \varphi(Q)])$ and by the surjection of $\varphi$ we have $\varphi(I) = I$.
Since $(1, -1) \notin \sigma([I, 0])$, we have $(1, -1) \notin \sigma([\varphi(I), \varphi(0)]) = \sigma([I, \varphi(0)])$. Hence $\varphi(0) = 0$.
\end{proof}

For any $P, Q \in \mathcal P(\mathcal H)$, it is easy to verify that $P \vee Q = I$ if and only if $(1, 1) \notin \sigma([P, Q])$. Thus the following lemma is obvious.

\begin{lemma} \label{lem 2.2}
Let $P, Q \in \mathcal P(\mathcal H)$. If $P \vee Q =I$, then $\varphi(P) \vee \varphi(Q) =I$.
\end{lemma}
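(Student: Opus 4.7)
The proof plan is essentially to chase the definition, since the excerpt has already pointed out the crucial equivalence: for $P,Q \in \mathcal P(\mathcal H)$, one has $P\vee Q = I$ if and only if $(1,1)\notin \sigma([P,Q])$. I would begin by briefly justifying this equivalence, because it is the only nontrivial input. Namely, $(1,1)\in \sigma([P,Q])$ means $P+Q$ fails to be invertible, i.e.\ there is a nonzero $\xi\in \mathcal H$ with $(P+Q)\xi=0$. Since $P$ and $Q$ are positive, the identity $\langle P\xi,\xi\rangle+\langle Q\xi,\xi\rangle=0$ forces $P\xi=Q\xi=0$, so $\xi$ lies in $(P\mathcal H)^\perp\cap(Q\mathcal H)^\perp=(P\vee Q)^\perp\mathcal H$. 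Conversely any $\xi\in(P\vee Q)^\perp\mathcal H$ is in the kernel of $P+Q$. Hence the existence of such $\xi$ is equivalent to $P\vee Q\neq I$.

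Given this equivalence the lemma is a one-line deduction. Assume $P\vee Q=I$, so $(1,1)\notin\sigma([P,Q])$. By the standing hypothesis that $\varphi$ shrinks the joint spectrum of any pair of projections,
\[
\sigma([\varphi(P),\varphi(Q)])\subseteq \sigma([P,Q]),
\]
and therefore $(1,1)\notin\sigma([\varphi(P),\varphi(Q)])$. Applying the equivalence in the other direction to the pair $\varphi(P),\varphi(Q)\in\mathcal P(\mathcal H)$ yields $\varphi(P)\vee \varphi(Q)=I$, which is what was to be shown.

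There is essentially no obstacle here: the content of the lemma is entirely packaged in the characterization of $P\vee Q=I$ via the point $(1,1)$ of the joint spectrum, and surjectivity is not even needed for this step (it was only used in Lemma~\ref{lem 2.1} to handle the point $(1,0)$). The only mildly nontrivial point to be careful about is the positivity argument showing that $P+Q$ has nontrivial kernel exactly on $(P\vee Q)^\perp\mathcal H$; once that is in place, the rest is purely formal.
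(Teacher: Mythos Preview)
Your proof is correct and follows exactly the same approach as the paper: the paper states the equivalence $P\vee Q=I \iff (1,1)\notin\sigma([P,Q])$ and then says the lemma is obvious, while you supply the short positivity argument the paper omits. There is no substantive difference.
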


\begin{lemma} \label{lem 2.3}
Let $P, Q \in \mathcal P(\mathcal H)$. If $P \vee Q =I, P \wedge Q = 0$, then $\varphi(P) \vee \varphi(Q) =I, \varphi(P) \wedge \varphi(Q) = 0$.
\end{lemma}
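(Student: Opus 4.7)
The plan is to reduce both conclusions to a single invertibility statement about the operator $P-Q$, corresponding to the point $(1,-1)$ of the joint spectrum. Since Lemma~\ref{lem 2.2} already supplies $\varphi(P) \vee \varphi(Q) = I$, the remaining task is to establish $\varphi(P) \wedge \varphi(Q) = 0$.

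The crucial algebraic fact is the kernel decomposition
$$\ker(P-Q) = (P \wedge Q) \oplus (P^{\perp} \wedge Q^{\perp}),$$
valid for any two projections $P, Q$. To see this, suppose $(P-Q)\xi = 0$ and set $\eta = P\xi = Q\xi$; then $P\eta = P^{2}\xi = P\xi = \eta$ and likewise $Q\eta = \eta$, so $\eta$ lies in $P \wedge Q$, while $\xi - \eta$ is annihilated by both $P$ and $Q$ and therefore lies in $P^{\perp} \wedge Q^{\perp}$. The two summands are orthogonal, giving the decomposition.

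With this identity in hand the proof is almost immediate. The hypothesis $P \vee Q = I$ is equivalent, via De Morgan, to $P^{\perp} \wedge Q^{\perp} = 0$; combined with $P \wedge Q = 0$ this makes the right-hand side of the identity trivial, so $P - Q$ has zero kernel and hence is invertible on the finite-dimensional $\mathcal H$. That is exactly $(1,-1) \notin \sigma([P,Q])$. The spectrum-shrinking assumption then yields $(1,-1) \notin \sigma([\varphi(P),\varphi(Q)])$, i.e. $\varphi(P) - \varphi(Q)$ is invertible, and applying the same identity to $\varphi(P), \varphi(Q)$ forces $\varphi(P) \wedge \varphi(Q) = 0$, which completes the proof together with Lemma~\ref{lem 2.2}.

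There is no substantial obstacle here; the one thing to notice is which coordinate of the joint spectrum simultaneously encodes $P \wedge Q = 0$ and $P \vee Q = I$, and once the kernel decomposition above is written down the spectrum-shrinking hypothesis does all the work in a single line.
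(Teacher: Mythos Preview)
Your proof is correct and follows essentially the same route as the paper: both reduce the claim to the invertibility of $P-Q$ (the point $(1,-1)$ in the joint spectrum) and then transfer this via the spectrum-shrinking hypothesis. The only differences are cosmetic: the paper also invokes the point $(1,1)$ and cites \cite{WJRQ} for the needed equivalence, whereas you prove the kernel decomposition $\ker(P-Q)=(P\wedge Q)\oplus(P^{\perp}\wedge Q^{\perp})$ directly and observe that invertibility of $\varphi(P)-\varphi(Q)$ alone already yields both conclusions, making your appeal to Lemma~\ref{lem 2.2} redundant but harmless.
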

\begin{proof}
If $P= I, Q=0$ or $P=0, Q=I$, then Lemma \ref{lem 2.1} gives the result. Assume that $P, Q \in \mathcal P(\mathcal H)\setminus \{0, I\}$. Then it follows from \cite{WJRQ} that
$(1, 1) \notin \sigma([P,Q]), (1, -1) \notin \sigma([P, Q])$. Hence $(1, 1) \notin \sigma([\varphi(P),\varphi(Q)]), (1, -1) \notin \sigma([\varphi(P),\varphi(Q)])$.
Thus $\varphi(P) + \varphi(Q)$ and $\varphi(P) - \varphi(Q)$ are both invertible. Hence  $\varphi(P) \vee \varphi(Q) =I, \varphi(P) \wedge \varphi(Q) = 0$.
\end{proof}

In the following, we denote by $r(P)$ the rank of $P$ for any $P \in \mathcal P(\mathcal H)$.

\begin{lemma} \label{lem 2.4}
Let $P, Q \in \mathcal P(\mathcal H)$. If $r(P) = r(Q)$, then $r(\varphi(P))=r(\varphi(Q))$.
Moreover, $\varphi(\mathcal P_k (\mathcal H)) = \mathcal P_k (\mathcal H),  \forall \ k \in \{0, 1, 2, \dots, n\}$.
\end{lemma}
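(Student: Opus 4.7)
The plan breaks into two parts matching the two assertions of the lemma. For the first, I will exhibit, for any two projections $P, Q \in \mathcal{P}_k(\mathcal{H})$ of the same rank, a single projection $R$ whose range is a common algebraic complement of $P(\mathcal{H})$ and $Q(\mathcal{H})$. Such an $R$ of rank $n-k$ exists in finite dimensions because the set of $(n-k)$-dimensional subspaces transversal to a fixed $k$-dimensional subspace is open and dense in the Grassmannian, and two open dense sets have nonempty intersection. Then $P \vee R = Q \vee R = I$ and $P \wedge R = Q \wedge R = 0$, so two applications of Lemma \ref{lem 2.3} give that $(\varphi(P), \varphi(R))$ and $(\varphi(Q), \varphi(R))$ are both complementary pairs in $\mathcal{P}(\mathcal{H})$. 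The identity $r(A) + r(B) = n$ for any such pair on a finite dimensional space then yields $r(\varphi(P)) = n - r(\varphi(R)) = r(\varphi(Q))$.

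For the second part, define $f: \{0, 1, \dots, n\} \to \{0, 1, \dots, n\}$ by $f(k) = r(\varphi(P))$ for any $P \in \mathcal{P}_k(\mathcal{H})$; this is well-defined by Part 1. Lemma \ref{lem 2.1} gives $f(0) = 0$ and $f(n) = n$, and the common-complement construction above also shows $f(k) + f(n-k) = n$. Surjectivity of $\varphi$ together with the partition $\mathcal{P}(\mathcal{H}) = \bigsqcup_{k=0}^n \mathcal{P}_k(\mathcal{H})$ forces $\bigcup_k \mathcal{P}_{f(k)}(\mathcal{H}) = \mathcal{P}(\mathcal{H})$, so $f$ is surjective on $\{0, 1, \dots, n\}$ and hence bijective.

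The crucial step is the monotonicity of $f$. For any $k, l \geq 0$ with $k + l \geq n$, a simple dimension count produces $P \in \mathcal{P}_k(\mathcal{H})$ and $Q \in \mathcal{P}_l(\mathcal{H})$ with $P \vee Q = I$; Lemma \ref{lem 2.2} then gives $\varphi(P) \vee \varphi(Q) = I$, so $f(k) + f(l) \geq n$. Writing $l = (n-k) + m$ with $m \geq 0$ and using $f(n-k) = n - f(k)$, this reads $f(n-k+m) \geq f(n-k)$, which is exactly that $f$ is non-decreasing. A non-decreasing bijection of $\{0, 1, \dots, n\}$ is strictly increasing and therefore equals the identity, so $f(k) = k$ for every $k$; hence $\varphi(\mathcal{P}_k(\mathcal{H})) \subseteq \mathcal{P}_k(\mathcal{H})$, and surjectivity of $\varphi$ combined with the partition upgrades this inclusion to equality.

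No step presents a serious obstacle. The one conceptual insight is to recognize that Lemma \ref{lem 2.2}, applied to pairs of ranks with $k+l \geq n$, is precisely the right tool to encode the monotonicity of $f$; everything else is bookkeeping together with the standard existence of a common complement. The only minor subtlety to track is that surjectivity of $\varphi$ on $\mathcal{P}(\mathcal{H})$ does not a priori restrict to surjectivity of $\varphi|_{\mathcal{P}_k}$, but this drops out automatically once $f=\mathrm{id}$ is established.
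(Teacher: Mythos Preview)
Your proof is correct and takes a genuinely different route from the paper's in both halves.

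For the first assertion, the paper proceeds by a chain argument: it first treats the special case $r(P \wedge Q) = k-1$ by an explicit construction of a common complement $R$, and then reduces the general case to this one via a chain $P = Q_0, Q_1, \dots, Q_r = Q$ of rank-$k$ projections with $r(Q_i \wedge Q_{i+1}) = k-1$. You bypass the chain entirely by invoking the existence of a common complement for \emph{any} two equidimensional subspaces, justified by a density argument in the Grassmannian. Your approach is shorter and more conceptual, though it imports a (standard) topological fact external to the paper; the paper's argument is fully self-contained and constructive.

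For the second assertion, the paper defines the same rank map (called $g$ there), observes $g(n-k) = n - g(k)$ and that $g$ is bijective, and then argues by contradiction that $g(1)=1$ using Lemma~\ref{lem 2.2}, ``continuing in this way'' to obtain $g(k)=k$ for all $k$. You instead extract from Lemma~\ref{lem 2.2} the single inequality $f(k) + f(l) \ge n$ whenever $k + l \ge n$, combine it with $f(n-k) = n - f(k)$ to deduce monotonicity of $f$, and conclude that a monotone bijection of $\{0,1,\dots,n\}$ is the identity. This is a cleaner packaging of essentially the same underlying mechanism and avoids the case-by-case iteration.
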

\begin{proof}
Notice $\varphi(I)=I, \varphi(0)=0$. We may assume that $P, Q \in \mathcal P_k(\mathcal H)$, where $k \in \{1, 2, \dots, n-1\}$.

We first assume that $r(P \wedge Q) = k-1$. It follows that $r(P \vee Q)=k+1$. Then there exist linearly independent vectors $x_1, x_2, \dots, x_{k-1}, \alpha, \beta \in \mathcal H$
such that $P$ is the projection onto the subspace generated by $x_1, x_2, \dots, x_{k-1}, \alpha$ and $Q$ is the projection onto the subspace generated by
$x_1, x_2, \dots, x_{k-1}, \beta$. Take $R = P_1 + (I - P \vee Q)$, where $P_1$ is the rank one projection onto $\mathbb C (\alpha + \beta)$.
It follows that $P \vee R =I, P \wedge R = 0$ and $Q \vee R =I, Q \wedge R = 0$. By Lemma \ref{lem 2.3} we obtain that $\varphi(P) \vee \varphi(R) =I, \varphi(P) \wedge \varphi(R) = 0$
and $\varphi(Q) \vee \varphi(R) =I, \varphi(Q) \wedge \varphi(R) = 0$. Hence
$$r(\varphi(P))=n-r(\varphi(R))=r(\varphi(Q)).$$

Now assume that $r(P \wedge Q) = k-r$, where $1 \le r \le k$. Then there exist linearly independent vectors
$x_1, x_2, \dots, x_{k-r}, \alpha_1, \alpha_2, \dots, \alpha_r, \beta_1, \beta_2, \dots, \beta_r$ such that $P$ is the projection onto the subspace generated by
$x_1, x_2, \dots, x_{k-r}, \alpha_1, \alpha_2, \dots, \alpha_r$ and $Q$ is the projection onto the subspace generated by $x_1, x_2, \dots, x_{k-r}, \beta_1, \beta_2, \dots, \beta_r$.
Take $Q_0 = P, Q_r = Q$. For each $i \in \{1, 2, \dots, r-1\}$, let $Q_i$ be the projection onto the subspace generated by
$x_1, x_2, \dots, x_{k-r}, \beta_1, \dots, \beta_i, \alpha_{i+1}, \dots, \alpha_r$. It follows that $Q_0, Q_1, \dots, Q_r \in \mathcal P_k(H)$ and $r(Q_i \wedge Q_{i+1}) = k-1$
for every $i \in \{0, 1, \dots, r-1 \}$. Then the result of the previous paragraph implies that
$$r(\varphi(P))=r(\varphi(Q)).$$

Hence there exists a map $g : \{0,1,2, \dots, n\} \to \{0,1,2,\dots, n\}$ such that
$\varphi(\mathcal P_k(\mathcal H)) \subseteq \mathcal P_{g(k)}(\mathcal H), \forall \ k \in \{0, 1, 2, \dots, n\}$. By the fact that $\varphi$ is surjective,
we obtain that $g$ is a bijection and $\varphi(\mathcal P_k(\mathcal H)) = \mathcal P_{g(k)}(\mathcal H), \forall \ k \in \{0, 1, 2, \dots, n\}$.
In particular, by Lemma \ref{lem 2.3} we have $g(n-k)= n-g(k), \forall \ k \in \{0, 1, 2, \dots, n\}$.

Clearly, $g(0)=0$ and $g(n) =n$. Assume that $s = g(1) > 1$. Then $g(n-1) = n - s < n-1$. By the fact that $g$ is a bijection, there exists  $l >1$ such that $g(l)=1$.
Take two projections $P_1 \in \mathcal P_{n-1}(\mathcal H), P_2 \in \mathcal P_{l}(\mathcal H)$ with $P_1 \vee P_2 =I$.
It follows that $\varphi(P_1) \in \mathcal P_{n-s}(H), \varphi(P_2) \in \mathcal P_{1}(H)$. Then $r(\varphi(P_1))+r(\varphi(P_2)) < n$.
Therefore $\varphi(P_1) \vee \varphi(P_2) \neq I$ and we obtain a contradiction according to Lemma \ref{lem 2.2}. Hence $g(1)=1, g(n-1)=n-1$.
Continuing in this way, we have $\varphi(\mathcal P_k (\mathcal H)) = \mathcal P_k (\mathcal H),  \forall \ k \in \{0, 1, 2, \dots, n\}$.
\end{proof}

In the following we will show that the restriction of $\varphi$ on each Grassmann space $\mathcal P_k (\mathcal H)$ is a bijection and thus $\varphi$ is a bijection.
We present two necessary lemmas.

\begin{lemma} \label{lem 2.5}
Let $Q \in \mathcal P_{n-1}(\mathcal H), P \in \mathcal{P}(\mathcal H)$. If $\varphi(P) \le \varphi(Q)$, then $P \le Q$.
Moreover, $\varphi|_{\mathcal P_{n-1}(\mathcal H)}$ is a bijection.
\end{lemma}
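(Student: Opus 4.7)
The plan is to translate the inequality $\varphi(P) \le \varphi(Q)$ into information about $\sigma([\varphi(P), \varphi(Q)])$, invoke the spectrum-shrinking hypothesis to transfer it to $\sigma([P, Q])$, and then decode the latter into the desired inclusion $P \le Q$.

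First I will use the following easy observation: for any projections $R \le S$ with $S \ne I$, the operator $c_1 R + c_2 S$ annihilates the nonzero subspace $(I-S)\mathcal H$, hence is never invertible, so $\sigma([R, S]) = \mathbb{C}^2$. Since $\varphi$ preserves Grassmann spaces by Lemma \ref{lem 2.4}, $\varphi(Q) \in \mathcal P_{n-1}(\mathcal H)$, in particular $\varphi(Q) \ne I$. Applying the observation to $R = \varphi(P)$, $S = \varphi(Q)$ yields $\sigma([\varphi(P), \varphi(Q)]) = \mathbb{C}^2$, and the shrinking hypothesis $\sigma([\varphi(P), \varphi(Q)]) \subseteq \sigma([P, Q])$ forces $\sigma([P, Q]) = \mathbb{C}^2$.

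Next I will decode $\sigma([P, Q]) = \mathbb{C}^2$. Specializing to $(c_1, c_2) = (1, 1)$ shows $P + Q$ is not invertible, so there is $\xi \ne 0$ with $(P + Q)\xi = 0$. Since $P$ and $Q$ are positive, $\|P\xi\|^2 + \|Q\xi\|^2 = \langle (P+Q)\xi, \xi\rangle = 0$, hence $P\xi = Q\xi = 0$. Thus $\xi \perp P\mathcal H + Q\mathcal H$, which gives $P \vee Q \ne I$. Because $Q$ has rank $n-1$ and $Q \le P \vee Q$, the alternative $P \vee Q = I$ is ruled out, so $P \vee Q = Q$, i.e., $P \le Q$. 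For the second assertion, surjectivity of $\varphi|_{\mathcal P_{n-1}(\mathcal H)}$ onto $\mathcal P_{n-1}(\mathcal H)$ is already contained in Lemma \ref{lem 2.4}. For injectivity, suppose $Q_1, Q_2 \in \mathcal P_{n-1}(\mathcal H)$ with $\varphi(Q_1) = \varphi(Q_2)$; applying the first assertion twice, once with $(P, Q) = (Q_1, Q_2)$ and once with $(Q_2, Q_1)$, yields $Q_1 \le Q_2 \le Q_1$, so $Q_1 = Q_2$.

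The only nontrivial step is the decoding of $\sigma([P,Q]) = \mathbb{C}^2$, where positivity of projections is used in a pivotal way to turn non-invertibility of $P + Q$ into $P\xi = Q\xi = 0$; everything else is formal and relies only on Lemmas \ref{lem 2.1}--\ref{lem 2.4}.
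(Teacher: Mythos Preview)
Your proof is correct and follows essentially the same route as the paper's: both arguments use Lemma~\ref{lem 2.4} to get $\varphi(Q)\in\mathcal P_{n-1}(\mathcal H)$, observe $\varphi(P)\vee\varphi(Q)\ne I$, transfer this via the joint spectrum to $P\vee Q\ne I$, and conclude $P\le Q$ since $Q$ has corank one. The only cosmetic difference is that the paper cites Lemma~\ref{lem 2.2} for the transfer step, whereas you unpack that lemma inline by computing $\sigma([\varphi(P),\varphi(Q)])=\mathbb C^2$ and reading off the point $(1,1)$.
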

\begin{proof}
By Lemma \ref{lem 2.4}, $\varphi(Q) \in \mathcal P_{n-1}(\mathcal H)$. Since  $\varphi(P) \le \varphi(Q)$, $\varphi(P) \vee \varphi(Q) \neq I$.
By Lemma \ref{lem 2.2} and the fact that $Q \in \mathcal P_{n-1}(H)$, we have $P \le Q$.  It is easy to verify that $\varphi|_{\mathcal P_{n-1}(\mathcal H)}$ is a bijection.
\end{proof}

For convenience, we denote by $\Phi = \phi|_{\mathcal P_{n-1}(\mathcal H)}$ in the following proposition.

\begin{proposition} \label{prop 2.7}
Let $k \in \{1, 2, \dots, n\}$ and $P \in \mathcal P_{n-k}(\mathcal H)$. Assume that $P' \in \mathcal P_{n-k}(\mathcal H)$ with $\varphi(P')=P$.
Then for any $k$ projections $Q_1, Q_2, \dots, Q_k \in \mathcal P_{n-1}(\mathcal H)$ with $P = Q_1 \wedge Q_2 \wedge \dots \wedge Q_k$,
$P' = \wedge_{1 \le i \le k} \Phi^{-1}(Q_i)$.
Moreover, $\varphi$ is a bijection.
\end{proposition}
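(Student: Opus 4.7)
My plan is to prove the identity $P'=\bigwedge_{i=1}^{k}\Phi^{-1}(Q_i)$ without an induction on $k$, by a sandwich argument that uses Lemma \ref{lem 2.5} in one direction and the joint spectrum shrinking hypothesis in the other, and then to deduce that $\varphi$ is a bijection from the resulting uniqueness of preimages on each Grassmann space.

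Let $R_i=\Phi^{-1}(Q_i)\in\mathcal P_{n-1}(\mathcal H)$ and $S=\bigwedge_{i=1}^{k}R_i$. For the inclusion $P'\leq S$: for each $i$ we have $\varphi(P')=P\leq Q_i=\varphi(R_i)$ with $\varphi(R_i)$ of rank $n-1$, so Lemma \ref{lem 2.5} yields $P'\leq R_i$, and taking the meet over $i$ gives $P'\leq S$. For the reverse inequality, which Lemma \ref{lem 2.5} cannot deliver directly, I would feed the spectrum-shrinking hypothesis back into the picture. Since $S\leq R_i$, the sum $S+R_i$ is not invertible, so $(1,1)\in\sigma([S,R_i])$; by hypothesis this forces $(1,1)\in\sigma([\varphi(S),Q_i])$, i.e.\ $\varphi(S)\vee Q_i\neq I$. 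Because $Q_i$ has rank $n-1$, the only projection strictly above $Q_i$ is $I$, so $\varphi(S)\leq Q_i$ for every $i$, whence $\varphi(S)\leq\bigwedge_{i}Q_i=P$.

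To close the identity I would compare ranks. By Lemma \ref{lem 2.4}, $r(S)=r(\varphi(S))$; since $\varphi(S)\leq P$ this gives $r(S)\leq n-k$, while $P'\leq S$ gives $r(S)\geq r(P')=n-k$. Hence $r(S)=n-k=r(P')$, and $P'\leq S$ forces $P'=S$. For the bijectivity, I would note that by Lemma 2.6(1) every $P\in\mathcal P_{n-k}(\mathcal H)$ does admit a representation as a meet of $k$ rank-$n-1$ projections, so the formula just established exhibits any preimage $P'$ in $\mathcal P_{n-k}(\mathcal H)$ as a specific meet of elements of $\Phi^{-1}(\mathcal P_{n-1}(\mathcal H))$, which is an explicit (hence unique) formula. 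Thus $\varphi|_{\mathcal P_{n-k}(\mathcal H)}$ is injective, and combined with the surjectivity $\varphi(\mathcal P_{k}(\mathcal H))=\mathcal P_{k}(\mathcal H)$ from Lemma \ref{lem 2.4}, it is bijective on every Grassmann space. Since $\mathcal P(\mathcal H)$ is the disjoint union of its Grassmann spaces, $\varphi$ is a bijection on $\mathcal P(\mathcal H)$.

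The main obstacle I anticipate is the inequality $\varphi(S)\leq P$: Lemma \ref{lem 2.5} goes the \emph{wrong way} here, so the order-preservation required must be wrung out of the spectrum-shrinking hypothesis applied pairwise to $(S,R_i)$, together with the rigidity of rank-$n-1$ projections (that a non-$I$ join with such a projection is already below it). Once this asymmetry is handled, the rank count and the passage to bijectivity are formal.
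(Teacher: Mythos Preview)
Your argument contains a genuine gap at the crucial step. You write: ``Since $S\leq R_i$, the sum $S+R_i$ is not invertible, so $(1,1)\in\sigma([S,R_i])$; by hypothesis this forces $(1,1)\in\sigma([\varphi(S),Q_i])$.'' This uses the spectrum-shrinking hypothesis in the wrong direction. The assumption is $\sigma([\varphi(S),\varphi(R_i)])\subseteq\sigma([S,R_i])$, so from $(1,1)\in\sigma([S,R_i])$ you can conclude \emph{nothing} about $\sigma([\varphi(S),Q_i])$; the inclusion only lets you transfer \emph{non}-membership from the domain pair to the image pair. Equivalently, Lemma~\ref{lem 2.2} says $P\vee Q=I\Rightarrow\varphi(P)\vee\varphi(Q)=I$, and you are invoking its converse, which is not available at this stage (it is precisely part of what the proposition is meant to yield).

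Without this step you cannot bound $r(S)$ from above by $n-k$: knowing only that $R_1,\dots,R_k\in\mathcal P_{n-1}(\mathcal H)$ are the $\Phi$-preimages of the $Q_i$ gives $r(S)\ge n-k$, but nothing a priori prevents $r(S)>n-k$, since distinct rank-$(n-1)$ projections can have a meet of any rank between $n-k$ and $n-1$. The paper closes this gap by induction on $k$: assuming the result (and hence bijectivity of $\varphi$) on $\mathcal P_{n-s}(\mathcal H)$, one forms $P_1=Q_1\wedge\cdots\wedge Q_s$ and $P_2=Q_1\wedge\cdots\wedge Q_{s-1}\wedge Q_{s+1}$, uses the inductive hypothesis to identify their unique preimages as the corresponding meets of the $\Phi^{-1}(Q_i)$, and then obtains $P'$ as the meet of these two distinct rank-$(n-s)$ projections, forcing $r(P')=n-s-1$. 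The induction is what supplies the missing upper bound on the rank of the meet; your direct sandwich argument does not.
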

\begin{proof}
We prove the result by a mathematical induction on $k$. From Lemma \ref{lem 2.5}, the result is true when $k=1$. Assume that the result is true when $k=s$.
Now let $k = s + 1$ and assume that $Q_1, Q_2, \dots, Q_s, Q_{s+1} \in \mathcal P_{n-1}(\mathcal H)$ with $P = Q_1 \wedge Q_2 \wedge \dots \wedge Q_s \wedge Q_{s+1}$.

Take $P_1 = Q_1 \wedge Q_2 \wedge \dots \wedge Q_s$ and $P_2 = Q_1 \wedge Q_2 \wedge \dots \wedge Q_{s-1} \wedge Q_{s+1}$.
Clearly $P_1, P_2$ are two different projections in $\mathcal P_{n-s}(\mathcal H)$. By the assumption that the result is true when $k=s$,
we have $\varphi|_{\mathcal P_{n-s}}(\mathcal H)$ is a bijection and
\begin{eqnarray}
&&(\varphi|_{\mathcal P_{n-s}(\mathcal H)})^{-1}(P_1) = \Phi^{-1}(Q_1) \wedge \Phi^{-1}(Q_2) \wedge \dots \wedge \Phi^{-1}(Q_s), \notag \\
&&(\varphi|_{\mathcal P_{n-s}(\mathcal H)})^{-1}(P_2) = \Phi^{-1}(Q_1) \wedge \Phi^{-1}(Q_2) \wedge \dots \wedge \Phi^{-1}(Q_{s-1}) \wedge \Phi^{-1}(Q_s+1). \label{2.7.1}
\end{eqnarray}
By Lemma \ref{lem 2.5}, $P' \le \Phi^{-1}(Q_i)$ for each $i \in \{1, 2, \dots, s+1\}$.
Hence $P' \le (\varphi|_{\mathcal P_{n-s}(\mathcal H)})^{-1}(P_1) \wedge (\varphi|_{\mathcal P_{n-s}(\mathcal H)})^{-1}(P_2)$.
Since $\varphi|_{\mathcal P_{n-s}}$ is a bijection, $(\varphi|_{\mathcal P_{n-s}(\mathcal H)})^{-1}(P_1) \neq (\varphi|_{\mathcal P_{n-s}(\mathcal H)})^{-1}(P_2)$ and
thus $r((\varphi|_{\mathcal P_{n-s}(\mathcal H)})^{-1}(P_1) \wedge (\varphi|_{\mathcal P_{n-s}(\mathcal H)})^{-1}(P_2)) \le n-s-1 = r(P')$.
Therefore $P' = (\varphi|_{\mathcal P_{n-s}(\mathcal H)})^{-1}(P_1) \wedge (\varphi|_{\mathcal P_{n-s}(\mathcal H)})^{-1}(P_2)=
\wedge_{1 \le i \le s+1} \Phi^{-1}(Q_i)$ from (\ref{2.7.1}). Hence $\varphi|_{\mathcal P_{n-s-1}}(\mathcal H)$ is also a bijection. Moreover, $\varphi$ is a bijection.
\end{proof}

According to Proposition \ref{prop 2.7}, we have the following corollary.

\begin{corollary} \label{cor 2.8}
If $P, Q \in \mathcal P(H)$, then we have the following results.
\begin{itemize}
\item[(1)] If $P \ne I$ and $\{Q_{\lambda}: \lambda \in \Omega \} \subseteq  \mathcal P_{n-1}(\mathcal H)$ with $ \wedge_{\lambda \in \Lambda} Q_{\lambda} = P$,
then $\varphi^{-1}(P) = \wedge_{\lambda \in \Omega} \varphi^{-1}(Q_{\lambda})$;
\item[(2)] If $P \le Q$, then $\varphi^{-1}(P) \le \varphi^{-1}(Q)$;
\item[(3)] $\varphi^{-1}(P \wedge Q) = \varphi^{-1}(P) \wedge \varphi^{-1}(Q)$, $\varphi^{-1}(P \vee Q) = \varphi^{-1}(P) \vee \varphi^{-1}(Q)$;
\end{itemize}
\end{corollary}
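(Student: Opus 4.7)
The plan is to use Proposition \ref{prop 2.7} as the main engine, combined with a finite-selection argument reducing an arbitrary infimum of rank-$(n-1)$ projections to the meet of finitely many of them.

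For part (1), the key reduction is the following: if $P = \wedge_{\lambda \in \Omega} Q_\lambda$ with $Q_\lambda \in \mathcal P_{n-1}(\mathcal H)$ and $k := n - r(P) \ge 1$, then one can choose $\lambda_1, \dots, \lambda_k \in \Omega$ with $\wedge_{i=1}^k Q_{\lambda_i} = P$. I would pick $\lambda_1$ arbitrarily and inductively choose $\lambda_{i+1}$ so that $Q_{\lambda_{i+1}}$ does not dominate the running meet $R_i := \wedge_{j \le i} Q_{\lambda_j}$; this is always possible when $R_i \neq P$, by the infimum hypothesis. The standard identity $r(R_{i+1}) \ge r(R_i) - 1$ for the meet of any projection with one of rank $n-1$, together with strict non-dominance, forces $r(R_{i+1}) = r(R_i) - 1$. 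After $k$ steps $R_k$ has rank $r(P)$ and contains $P$, hence equals $P$. Proposition \ref{prop 2.7} then yields $\varphi^{-1}(P) = \wedge_{i=1}^k \Phi^{-1}(Q_{\lambda_i}) \ge \wedge_{\lambda \in \Omega} \Phi^{-1}(Q_\lambda)$. For the reverse inequality, Lemma \ref{lem 2.5} gives $\varphi^{-1}(P) \le \Phi^{-1}(Q_\lambda)$ for each $\lambda$ (since $P \le Q_\lambda$ translates to $\varphi(\varphi^{-1}(P)) \le \varphi(\Phi^{-1}(Q_\lambda))$ with $\Phi^{-1}(Q_\lambda) \in \mathcal P_{n-1}(\mathcal H)$), so $\varphi^{-1}(P) \le \wedge_\lambda \Phi^{-1}(Q_\lambda)$.

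Parts (2) and (3) then follow formally. For (2), the case $Q = I$ is immediate from $\varphi^{-1}(I)=I$, and otherwise I would write $Q = \wedge_\mu R_\mu$ and $P = \wedge_\lambda Q_\lambda$ as meets of rank-$(n-1)$ projections; since $P \le Q \le R_\mu$, the combined family $\{Q_\lambda\} \cup \{R_\mu\}$ still has meet $P$, and part (1) applied to it gives $\varphi^{-1}(P) \le \wedge_\mu \Phi^{-1}(R_\mu) = \varphi^{-1}(Q)$. The meet identity in (3) is obtained by the same device applied to representing families of $P$ and $Q$ separately (with $P = I$ or $Q = I$ handled trivially via Lemma \ref{lem 2.1}). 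The join identity then combines (2), which supplies $\varphi^{-1}(P) \vee \varphi^{-1}(Q) \le \varphi^{-1}(P \vee Q)$, with Lemma \ref{lem 2.4} (rank preservation) and the dimension identity $r(A \vee B) + r(A \wedge B) = r(A) + r(B)$ to show both sides have the same rank, forcing equality.

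The main obstacle I expect is verifying the finite-selection step in (1): confirming both that a suitable $\lambda_{i+1}$ always exists whenever $R_i$ strictly exceeds $P$, and that the rank drops by exactly one at each useful step rather than remaining stationary. Both points are routine in finite dimension, but they are precisely what makes Proposition \ref{prop 2.7} applicable to the (possibly infinite) family $\{Q_\lambda\}$ and thereby underpin all three conclusions.
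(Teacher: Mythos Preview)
Your proposal is correct and follows essentially the same route as the paper: reduce the arbitrary meet in (1) to a finite one via a rank-dropping selection of indices, invoke Proposition~\ref{prop 2.7}, and then derive (2) and (3) formally using Lemma~\ref{lem 2.5}, rank preservation (Lemma~\ref{lem 2.4}), and the dimension identity. The paper's own proof is terser---it asserts the existence of the finite subfamily $\lambda_1,\dots,\lambda_k$ without argument and dismisses (2) as ``clear from (1)''---so your explicit treatment of the finite-selection step and of (2) only fills in detail the paper omits rather than departing from its strategy.
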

\begin{proof}
(1) Notice that $P \le Q_{\lambda}$ for every $\lambda \in \Omega$. It follows from Lemma \ref{lem 2.5} that
$\varphi^{-1}(P) \le \varphi^{-1}(Q_{\lambda}), \forall \ \lambda \in \Omega$ and hence $\varphi^{-1}(P) \le \wedge_{\lambda \in \Omega} \varphi^{-1}(Q_{\lambda})$.
Assume that $P \in \mathcal P_{n-k}(\mathcal H)$. Then there exist $\lambda_1, \lambda_2, \dots, \lambda_k \in \Omega$ such that
$P = \wedge_{1 \le i \le k} Q_{\lambda_i}$. It follows from Proposition \ref{prop 2.7} that $\varphi^{-1}(P) =  \wedge_{1 \le i \le k} \varphi^{-1}(Q_{\lambda_i})$. Hence
$$\varphi^{-1}(P) = \wedge_{\lambda \in \Omega} \varphi^{-1}(Q_{\lambda}).$$

(2) This is clear from (1).

(3) From (1), it is obvious that $\varphi^{-1}(P \wedge Q) = \varphi^{-1}(P) \wedge \varphi^{-1}(Q)$. By (2), we have $\varphi^{-1}(P) \vee \varphi^{-1}(Q) \le \varphi^{-1}(P \vee Q)$.
Since $r(P) = r(\varphi^{-1}(P)), r(Q) = r(\varphi^{-1}(Q))$ and $r(\varphi^{-1}(P) \wedge \varphi^{-1}(Q)) = r(\varphi^{-1}(P \wedge Q))=r(P \wedge Q)$,
we have $r(\varphi^{-1}(P) \vee \varphi^{-1}(Q)) = r(\varphi^{-1}(P \vee Q))$ and therefore $\varphi^{-1}(P \vee Q) = \varphi^{-1}(P) \vee \varphi^{-1}(Q)$.
\end{proof}

Now we are able to prove the following result, which states that a surjective map $\varphi: \mathcal P(\mathcal H) \to \mathcal P(\mathcal H)$
that shrinks the joint spectrum of any two projections will preserve the joint spectrum of any two projections.

\begin{theorem} \label{thm 2.9}
If $\varphi : \mathcal P(\mathcal H) \to \mathcal P(\mathcal H)$ is a surjective map such that
$\sigma([\varphi(P), \varphi(Q)]) \subseteq \sigma([P,Q]), \ \forall \ P, Q \in \mathcal P(\mathcal H)$, then
$$\sigma([\varphi(P), \varphi(Q)]) = \sigma([P, Q]), \ \forall \ P, Q \in \mathcal P(\mathcal H).$$
\end{theorem}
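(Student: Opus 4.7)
The idea is to give an explicit lattice-theoretic description of $\sigma([P,Q])$ and then observe that each ingredient of that description is automatically preserved by $\varphi$, using the structural results established earlier in this section.

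First, I would completely classify $\sigma([P,Q])$ in elementary terms. Recall that $(c_1,c_2)\in\sigma([P,Q])$ iff there is a nonzero $\xi$ with $c_1P\xi+c_2Q\xi=0$. If $P\vee Q\neq I$, then $(I-P)\wedge(I-Q)\neq 0$ by de Morgan, so any nonzero $\xi_0\in\ker P\cap\ker Q$ witnesses $(c_1,c_2)\in\sigma([P,Q])$ for every $(c_1,c_2)$, giving $\sigma([P,Q])=\mathbb C^2$. If instead $P\vee Q=I$, then $\ker P\cap\ker Q=\{0\}$, and a short case analysis in $(c_1,c_2)$ shows that $(c_1,c_2)\in\sigma([P,Q])\setminus\{(0,0)\}$ precisely when (i) $c_2=0$ and $P\neq I$, or (ii) $c_1=0$ and $Q\neq I$, or (iii) $c_1+c_2=0$ and $P\wedge Q\neq 0$. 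The only nontrivial regime is the generic one $c_1c_2(c_1+c_2)\neq 0$: setting $t=-c_2/c_1\in\mathbb{C}\setminus\{0,1\}$ and $\eta=P\xi=tQ\xi$ forces $\eta\in\mathrm{range}(P)\cap\mathrm{range}(Q)=\mathrm{range}(P\wedge Q)$, while at the same time writing $\xi=\eta+\alpha=\eta/t+\beta$ with $\alpha\in\ker P$, $\beta\in\ker Q$ and subtracting yields $(1-1/t)\eta\in\ker P+\ker Q=\mathrm{range}(P\wedge Q)^\perp$; the two conditions force $\eta=0$ and then $\xi\in\ker P\cap\ker Q=\{0\}$, contradiction. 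For the antidiagonal case, $(c_1,-c_1)\in\sigma$ iff $\ker(P-Q)=\mathrm{range}(P\wedge Q)\oplus\mathrm{range}((I-P)\wedge(I-Q))$ is nonzero, which under $P\vee Q=I$ reduces to $P\wedge Q\neq 0$.

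Thus $\sigma([P,Q])$ is entirely determined by four binary pieces of lattice data: whether $P=I$, whether $Q=I$, whether $P\vee Q=I$, and whether $P\wedge Q=0$ (the trivial cases $P,Q\in\{0,I\}$ fit into the same scheme). Now by Lemma \ref{lem 2.1}, $\varphi(0)=0$ and $\varphi(I)=I$; by Proposition \ref{prop 2.7}, $\varphi$ is bijective; and by Corollary \ref{cor 2.8}, $\varphi^{-1}$ is a lattice homomorphism, so $\varphi$ itself is a lattice automorphism of $\mathcal P(\mathcal H)$. Consequently each of the four conditions above is invariant under $\varphi$, and applying the classification to both $(P,Q)$ and $(\varphi(P),\varphi(Q))$ yields the desired equality $\sigma([\varphi(P),\varphi(Q)])=\sigma([P,Q])$.

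The only real obstacle is the generic case in the classification step, which rests on the identity $\ker P+\ker Q=\mathrm{range}(P\wedge Q)^\perp$, a direct consequence of de Morgan in the projection lattice; everything else is bookkeeping.
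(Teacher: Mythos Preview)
Your argument is correct and follows essentially the same route as the paper: both use Lemma~\ref{lem 2.1}, Proposition~\ref{prop 2.7} and Corollary~\ref{cor 2.8} to conclude that $\varphi$ is a lattice automorphism fixing $0$ and $I$, and then invoke the fact that $\sigma([P,Q])$ is completely determined by the lattice data $P=I$, $Q=I$, $P\vee Q=I$, $P\wedge Q=0$. The only difference is that the paper imports this last fact from \cite{WJRQ}, whereas you derive it from scratch, making your version self-contained.
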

\begin{proof}
Note that $\varphi(I)=I, \varphi(0) = 0$. By Corollary \ref{cor 2.8}, we obtain that $P \vee Q = I$ if and only if $\varphi(P) \vee \varphi(Q)=I$ and
$P \wedge Q=0$ if and only if $\varphi(P) \wedge \varphi(Q) =0$. It follows from the result in \cite{WJRQ} that $\sigma([\varphi(P), \varphi(Q)]) = \sigma([P, Q])$
for any $P, Q \in \mathcal P(\mathcal H)$.
\end{proof}

\begin{remark} \label{rem 2.10}
By the previous discussions, we obtain that $\varphi$ is bijective and $\varphi^{-1}$ also preserves the joint spectrum of any two projections in $\mathcal P(\mathcal H)$.
Applying the same argument to $\varphi = (\varphi^{-1})^{-1}$, it follows from Corollary \ref{cor 2.8} that $\varphi(P) \vee \varphi(Q) = \varphi(P \vee Q),
\varphi(P) \wedge \varphi(Q) = \varphi(P \wedge Q)$ and $\varphi(P) \le \varphi(Q)$ if and only if $P \le Q$ for every $P, Q \in \mathcal P(H)$.
If $\mathcal H$ has dimension $2$, then any two different rank one projections $P, Q$ satisfy $P \vee Q =I, P \wedge Q = 0$ and therefore
$\sigma([P, Q]) = \{(c, 0): c \in \mathbb{C} \} \cup \{ (0, c): c \in \mathbb{C} \}$. Hence any bijection on $\mathcal P(\mathcal H)$ which fixes $0$ and $I$
preserves the joint spectrum of any two projections. In the following of this section we focus on the case that $3 \le n < +\infty$.
Fix an orthonormal basis $\alpha_1, \alpha_2, \dots, \alpha_n$ for $\mathcal H$. For a nonzero vector $\alpha \in \mathcal H$, we denote by $P_{\alpha}$ the projection onto
the one-dimensional subspace generated by $\alpha$. It follows that $\varphi(P_{\alpha_i})$ is a rank one projection for each $i \in \{1, 2, \dots, n\}$ and there exist nonzero vectors
$\beta_1, \beta_2, \dots, \beta_n$ such that $P_{\beta_{i}} = \varphi(P_{\alpha_i})$. By the fact that $\varphi(P) \vee \varphi(Q) = \varphi(P \vee Q)$
for every $P, Q \in \mathcal P(H)$, we have $\vee_{i}P_{\beta_i} = \varphi(\vee_{i}P_{\alpha_i}) = I$ and therefore $\beta_1, \beta_2, \dots, \beta_n$ are linearly independent.
Notice that for every $i \in \{2, \dots, n\}$, $P_{\alpha_1 + \alpha_i}$ is a rank one projection and $P_{\alpha_1 +  \alpha_i} \le P_{\alpha_1} \vee P_{\alpha_i}$.
We have $\varphi(P_{\alpha_1 + \alpha_i}) \le P_{\beta_1} \vee P_{\beta_i}$
and thus there exists a constant $0 \ne t_i \in \mathbb{C}$ such that $\varphi(P_{\alpha_1 + \alpha_i}) = P_{\beta_1 + t_i \beta_i}$.
\end{remark}

\begin{lemma} \label{lem 2.11}
With notations as in Remark \ref{rem 2.10}, we have that
$$\varphi(P_{\alpha_i + \alpha_j}) = P_{t_i \beta_i + t_j \beta_j}, \ \forall \ i,j \in \{2, 3, \dots, n\}.$$
\end{lemma}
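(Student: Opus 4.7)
The plan is to exploit the fact, established in Corollary \ref{cor 2.8} and Remark \ref{rem 2.10}, that $\varphi$ is a lattice automorphism of $\mathcal{P}(\mathcal{H})$; in particular it preserves arbitrary joins, meets, and the order relation. I will use this to triangulate the rank one projection $\varphi(P_{\alpha_i + \alpha_j})$ geometrically, by writing $P_{\alpha_i + \alpha_j}$ as a meet/join expression in which every other projection has a known image. The case $i = j$ is trivial since $P_{\alpha_i + \alpha_j} = P_{\alpha_i}$, so I will assume $i \ne j$.

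The key intermediate step is to identify $\varphi(P_{\alpha_1 + \alpha_i + \alpha_j})$. First I observe the two lattice inclusions
$$P_{\alpha_1 + \alpha_i + \alpha_j} \,\le\, P_{\alpha_1 + \alpha_i} \vee P_{\alpha_j}, \qquad P_{\alpha_1 + \alpha_i + \alpha_j} \,\le\, P_{\alpha_1 + \alpha_j} \vee P_{\alpha_i},$$
which hold because $\alpha_1 + \alpha_i + \alpha_j = (\alpha_1 + \alpha_i) + \alpha_j = (\alpha_1 + \alpha_j) + \alpha_i$. Applying $\varphi$ (which is join-preserving and order-preserving) and using $\varphi(P_{\alpha_1 + \alpha_i}) = P_{\beta_1 + t_i \beta_i}$, $\varphi(P_{\alpha_1 + \alpha_j}) = P_{\beta_1 + t_j \beta_j}$, $\varphi(P_{\alpha_i}) = P_{\beta_i}$, $\varphi(P_{\alpha_j}) = P_{\beta_j}$, I get that $\varphi(P_{\alpha_1 + \alpha_i + \alpha_j})$ lies below both of the rank-two projections $P_{\beta_1 + t_i \beta_i} \vee P_{\beta_j}$ and $P_{\beta_1 + t_j \beta_j} \vee P_{\beta_i}$. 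A short direct calculation inside the 3-dimensional space spanned by $\beta_1, \beta_i, \beta_j$ shows that the intersection of these two 2-planes is 1-dimensional, generated by $\beta_1 + t_i \beta_i + t_j \beta_j$. Since $\varphi(P_{\alpha_1 + \alpha_i + \alpha_j})$ has rank one by Lemma \ref{lem 2.4}, it must equal $P_{\beta_1 + t_i \beta_i + t_j \beta_j}$.

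With this in hand, the final step is another meet-of-joins argument. Since $\alpha_i + \alpha_j = (\alpha_1 + \alpha_i + \alpha_j) - \alpha_1$, I have
$$P_{\alpha_i + \alpha_j} \,\le\, P_{\alpha_1 + \alpha_i + \alpha_j} \vee P_{\alpha_1} \qquad \text{and} \qquad P_{\alpha_i + \alpha_j} \,\le\, P_{\alpha_i} \vee P_{\alpha_j}.$$
Applying $\varphi$ to both, I deduce that $\varphi(P_{\alpha_i + \alpha_j})$ is dominated by the meet of the 2-plane spanned by $\beta_1$ and $\beta_1 + t_i \beta_i + t_j \beta_j$ (equivalently, by $\beta_1$ and $t_i \beta_i + t_j \beta_j$) with the 2-plane spanned by $\beta_i, \beta_j$. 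That intersection is the line generated by $t_i \beta_i + t_j \beta_j$, and rank one again forces equality, yielding the claim.

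I do not expect a serious obstacle here: every step reduces to a routine intersection computation in a low-dimensional subspace, and the lattice preservation machinery does all the heavy lifting. The only point requiring care is to check that the two 2-dimensional subspaces occurring in each meet are genuinely distinct (so the intersections are 1-dimensional and not 2-dimensional), which holds because $t_i, t_j \ne 0$ and $\beta_1, \beta_i, \beta_j$ are linearly independent by Remark \ref{rem 2.10}.
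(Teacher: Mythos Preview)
Your proposal is correct and follows essentially the same approach as the paper: both first pin down $\varphi(P_{\alpha_1+\alpha_i+\alpha_j})=P_{\beta_1+t_i\beta_i+t_j\beta_j}$ via the two inclusions $P_{\alpha_1+\alpha_i+\alpha_j}\le P_{\alpha_1+\alpha_i}\vee P_{\alpha_j}$ and $P_{\alpha_1+\alpha_i+\alpha_j}\le P_{\alpha_1+\alpha_j}\vee P_{\alpha_i}$, and then use $P_{\alpha_i+\alpha_j}\le P_{\alpha_i}\vee P_{\alpha_j}$ together with the relation to $P_{\alpha_1}$ and $P_{\alpha_1+\alpha_i+\alpha_j}$ to finish. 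The only cosmetic difference is that the paper phrases the last step as $\varphi(P_{\alpha_1+\alpha_i+\alpha_j})\le P_{\beta_1}\vee\varphi(P_{\alpha_i+\alpha_j})$ rather than your $\varphi(P_{\alpha_i+\alpha_j})\le P_{\beta_1+t_i\beta_i+t_j\beta_j}\vee P_{\beta_1}$, but these amount to the same plane-intersection computation.
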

\begin{proof}
Fix $i, j \in \{2, 3,\dots, n\}$. We only need to consider the case when $i \ne j$. Let $\alpha = \alpha_1 + \alpha_i + \alpha_j$.
It follows that  $P_\alpha \le P_{\alpha_1 + \alpha_i} \vee P_{\alpha_j}, P_\alpha \le P_{\alpha_1 + \alpha_j} \vee P_{\alpha_i}$. Therefore
$$\varphi(P_\alpha) \le P_{\beta_1 + t_i \beta_i} \vee P_{\beta_j}, \varphi(P_\alpha) \le P_{\beta_1 + t_j \beta_j} \vee P_{\beta_i}.$$
Hence there exists constants $c_1, c_2$ with $\beta_1 + t_1 \beta_i + c_{1} \beta_j,\beta_1 + t_j \beta_j + c_2 \beta_i$ are both in the range of $\varphi(P_{\alpha})$.
Since $\beta_1, \beta_2, \dots, \beta_n$ are linearly independent, we have $c_1 = t_j, c_2 = t_i$ and
\begin{align}
\varphi(P_{\alpha_1 + \alpha_i + \alpha_j}) = P_{\beta_1 + t_i \beta_i + t_j \beta_j}. \label{2.11.1}
\end{align}
 On the other hand, $\alpha = \alpha_1 + (\alpha_i + \alpha_j)$. It follows that
 \begin{align}
 \varphi(P_\alpha) \le \varphi(P_{\alpha_1}) \vee \varphi(P_{\alpha_i+\alpha_j}). \label{2.11.2}
 \end{align}
Clearly, $\varphi(P_{\alpha_i+\alpha_j}) \le P_{\beta_i} \vee P_{\beta_j}$. It follows from (\ref{2.11.1}) and (\ref{2.11.2}) that
$\varphi(P_{\alpha_i + \alpha_j}) = P_{t_i \beta_i + t_j \beta_j}$.
\end{proof}

By Remark \ref{rem 2.10} and Lemma \ref{lem 2.11}, replacing $\beta_i$ by $t_i \beta_i$ for every $i \in \{2,3, \dots, n\}$ if necessary, in the following of this section we may assume that
$$\varphi(P_{\alpha_i + \alpha_j}) = P_{\beta_i + \beta_j}, \ \forall \ i,j \in \{1,2, \dots, n\}.$$

\begin{remark} \label{rem def f}
Notice that for every $z \in \mathbb{C}$, $P_{\alpha_1 + z\alpha_2} \le P_{\alpha_1} \vee P_{\alpha_2}$.
Hence $\varphi(P_{\alpha_1 + z\alpha_2}) \le P_{\beta_1} \vee P_{\beta_2}$. Thus we can define a map $f: \mathbb C \to \mathbb C$ with
$\varphi(P_{\alpha_1 + z\alpha_2}) = P_{\beta_1 + f(z) \beta_2}$. It is easy to verify that $f$ is a bijection with $f(1)=1, f(0) =0$.
\end{remark}

\begin{lemma} \label{lem 2.13}
For each $i \in \{ 2,3, \dots, n \}$ and $z \in \mathbb{C}$, $\varphi(P_{\alpha_1 + z\alpha_i}) = P_{\beta_1 + f(z) \beta_i}$.
\end{lemma}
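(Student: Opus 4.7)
The plan is to establish this by a ``three-decomposition'' argument for the vector $\gamma = \alpha_1 + z\alpha_2 + z\alpha_i$, analogous in spirit to the proof of Lemma~\ref{lem 2.11}. First, the case $z=0$ is trivial since $\varphi(P_{\alpha_1}) = P_{\beta_1}$ and $f(0)=0$, and the case $i=2$ is the defining property of $f$ from Remark~\ref{rem def f}. So fix $z \ne 0$ and $i \in \{3,\dots,n\}$.

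Next, since $P_{\alpha_1 + z\alpha_i} \le P_{\alpha_1} \vee P_{\alpha_i}$, the lattice-preservation properties from Corollary~\ref{cor 2.8} and Remark~\ref{rem 2.10} give $\varphi(P_{\alpha_1+z\alpha_i}) \le P_{\beta_1} \vee P_{\beta_i}$, so there is a unique nonzero scalar $g_i(z)\in \mathbb{C}$ with
$$\varphi(P_{\alpha_1 + z\alpha_i}) = P_{\beta_1 + g_i(z)\beta_i}.$$
The goal is to show $g_i(z) = f(z)$.

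Now I would decompose $\gamma = \alpha_1 + z\alpha_2 + z\alpha_i$ in three ways and apply $\varphi$ to the corresponding projection $P_\gamma$:
\begin{itemize}
\item From $\gamma = (\alpha_1 + z\alpha_2) + z\alpha_i$ we get $\varphi(P_\gamma) \le P_{\beta_1 + f(z)\beta_2} \vee P_{\beta_i}$.
\item From $\gamma = (\alpha_1 + z\alpha_i) + z\alpha_2$ we get $\varphi(P_\gamma) \le P_{\beta_1 + g_i(z)\beta_i} \vee P_{\beta_2}$.
\item From $\gamma = \alpha_1 + z(\alpha_2 + \alpha_i)$, together with the normalization $\varphi(P_{\alpha_2 + \alpha_i}) = P_{\beta_2 + \beta_i}$, we get $\varphi(P_\gamma) \le P_{\beta_1} \vee P_{\beta_2 + \beta_i}$.
\end{itemize}
Intersecting the first two two-dimensional subspaces and using that $\beta_1,\beta_2,\dots,\beta_n$ are linearly independent forces a nonzero vector in the range of $\varphi(P_\gamma)$ to be a scalar multiple of $\beta_1 + f(z)\beta_2 + g_i(z)\beta_i$; so $\varphi(P_\gamma) = P_{\beta_1 + f(z)\beta_2 + g_i(z)\beta_i}$. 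On the other hand, the third decomposition forces the range of $\varphi(P_\gamma)$ to lie in $\mathrm{span}(\beta_1,\beta_2+\beta_i)$, whose nonzero vectors all have equal $\beta_2$- and $\beta_i$-coefficients. Comparing yields $f(z) = g_i(z)$, as desired.

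The only potentially delicate point is verifying that the intersection of the first two 2-dimensional subspaces is indeed the 1-dimensional line spanned by $\beta_1 + f(z)\beta_2 + g_i(z)\beta_i$ (so that $\varphi(P_\gamma)$ is pinned down before invoking the third decomposition). This is a routine linear-algebra check using independence of $\{\beta_1,\beta_2,\beta_i\}$, but it is the step where one must be careful that $\varphi(P_\gamma)$ really is a rank-one projection of the asserted form, rather than a lower-rank object; rank preservation from Lemma~\ref{lem 2.4} makes this immediate.
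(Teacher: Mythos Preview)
Your argument is correct and is essentially the paper's own proof: both use the same auxiliary vector $\gamma=\alpha_1+z\alpha_2+z\alpha_i$ and the same three decompositions, only the order differs (the paper first intersects the constraints from $(\alpha_1+z\alpha_2,\alpha_i)$ and $(\alpha_1,\alpha_2+\alpha_i)$ to obtain $\varphi(P_\gamma)=P_{\beta_1+f(z)\beta_2+f(z)\beta_i}$, and then uses $(\alpha_1+z\alpha_i,\alpha_2)$ to read off $\varphi(P_{\alpha_1+z\alpha_i})$, whereas you first introduce $g_i(z)$ and arrive at $f(z)=g_i(z)$ via the third constraint). The content is identical.
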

\begin{proof}
We only need to prove the result for $i > 2$. Let $\alpha = \alpha_1 + z\alpha_2 + z\alpha_i$. It follows that
$P_\alpha \le P_{\alpha_1} \vee P_{\alpha_2 + \alpha_i}$ and $P_\alpha \le P_{\alpha_1 + z\alpha_2} \vee P_{\alpha_i}$.
Therefore $\varphi(P_\alpha) \le P_{\beta_1} \vee P_{\beta_2 + \beta_i}, \varphi(P_\alpha) \le P_{\beta_1 + f(z)\beta_2} \vee P_{\beta_i}$ and there exist two constants $c_1, c_2$
such that $\beta_1 + c_1(\beta_2 + \beta_i)$ and $\beta_1 + f(z) \beta_2 + c_2 \beta_i$ are both in the range of $\varphi(P_\alpha)$. Hence $c_1 = c_2 = f(z)$ and
\begin{align}
\varphi(P_{\alpha}) = P_{\beta_1 + f(z) \beta_2 + f(z) \beta_i}. \label{2.13.1}
\end{align}
On the other hand, $\alpha = (\alpha_1 + z\alpha_i) + \alpha_2$ and thus $\varphi(P_{\alpha}) \le \varphi(P_{\alpha_1 + z\alpha_i}) \vee P_{\beta_2}$.
By the fact that $\varphi(P_{\alpha_1 + z\alpha_i}) \le P_{\beta_1} \vee P_{\beta_i}$ and (\ref{2.13.1}) we obtain that $\varphi(P_{\alpha_1 + z\alpha_i}) = P_{\beta_1 + f(z) \beta_i}$.
\end{proof}

\begin{lemma} \label{lem 2.16}
The map $f$ given in Remark \ref{rem def f} is a ring automorphism of $\mathbb{C}$.
That is, $f$ is a bijection and $f(z_1 + z_2) = f(z_1) + f(z_2), f(z_1 z_2) = f(z_1)f(z_2)$ for any $z_1, z_2 \in \mathbb C$.
\end{lemma}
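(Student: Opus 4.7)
My plan is to establish additivity and multiplicativity of $f$ by the same method used in Lemmas~\ref{lem 2.11} and \ref{lem 2.13}: pick a vector in $\mathcal H$, decompose it as a sum in two or three different ways, translate each decomposition through $\varphi$ by means of the rank-one projection formulas already obtained, and match the resulting coefficients using the linear independence of $\beta_1, \dots, \beta_n$. Bijectivity of $f$ was already noted in Remark~\ref{rem def f}, so only the two ring-axiom identities remain.

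Before tackling the two identities, I would first extend Lemma~\ref{lem 2.13} to the three-term formula $\varphi(P_{\alpha_1 + z\alpha_i + w\alpha_j}) = P_{\beta_1 + f(z)\beta_i + f(w)\beta_j}$ (using the decompositions $\alpha_1 + (z\alpha_i + w\alpha_j)$ and $(\alpha_1 + z\alpha_i) + w\alpha_j$ and matching coefficients), and also to show that $\varphi(P_{\alpha_i + z\alpha_j}) = P_{\beta_i + f(z)\beta_j}$ holds for every pair of distinct indices $i, j \in \{2, \dots, n\}$ (using the three natural decompositions of $\alpha_1 + \alpha_i + z\alpha_j$, matching them against each other modulo scalars). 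This preliminary step is exactly where the hypothesis $n \ge 3$ is essential: the auxiliary third basis vector is needed to bridge the original definition of $f$ (via $(\alpha_1, \alpha_2)$) to other pairs.

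For additivity, I would set $v = \alpha_1 + (z_1+z_2)\alpha_2$ and write $v = u_1 + u_2$ with $u_1 = \alpha_1 + z_1\alpha_2 + \alpha_3$ and $u_2 = z_2\alpha_2 - \alpha_3$. The preliminary formulas give $\varphi(P_{u_1}) = P_{\beta_1 + f(z_1)\beta_2 + \beta_3}$ and $\varphi(P_{u_2}) = P_{\beta_3 + f(-z_2)\beta_2}$, while $v \in \mathrm{span}(\alpha_1, \alpha_2)$ and the definition of $f$ give $\varphi(P_v) = P_{\beta_1 + f(z_1+z_2)\beta_2}$. Expressing the latter as a linear combination of the former two and reading off the $\beta_3$-coefficient (which must vanish) forces $f(z_1+z_2) = f(z_1) - f(-z_2)$; putting $z_1 = 0$ yields $f(-z_2) = -f(z_2)$, so $f$ is odd and additivity follows.

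For multiplicativity (the case $z_1 z_2 = 0$ being trivial), I would set $w = \alpha_1 + z_1\alpha_3 + z_1 z_2\alpha_2$ and use the three decompositions
\[
w = (\alpha_1 + z_1 z_2\alpha_2) + z_1\alpha_3 = \alpha_1 + z_1(\alpha_3 + z_2\alpha_2) = (\alpha_1 + z_1\alpha_3) + z_1 z_2\alpha_2.
\]
Each forces the range of $\varphi(P_w)$ into a different two-dimensional subspace of $\mathrm{span}(\beta_1, \beta_2, \beta_3)$; normalizing the representative vector so that its $\beta_1$-coefficient equals $1$ and comparing the $\beta_2$- and $\beta_3$-coefficients across the three expressions pins down $f(z_1 z_2) = f(z_1)f(z_2)$. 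The main obstacle is not computational but combinatorial: the art lies in choosing auxiliary vectors (those involving the extra basis vector $\alpha_3$) whose several natural decompositions simultaneously encode the desired algebraic identity. Once these vectors are fixed, the rest is the same bookkeeping already carried out in Lemmas~\ref{lem 2.11} and \ref{lem 2.13}.
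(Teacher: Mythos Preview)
Your proposal is correct and follows essentially the same route as the paper. Your multiplicativity argument using $w=\alpha_1+z_1\alpha_3+z_1z_2\alpha_2$ is identical to the paper's (it calls the vector $\xi$). For additivity the paper works with $\alpha=\alpha_1+(z_1+z_2)\alpha_2+\alpha_3$ and its three decompositions directly, while you stay in $\operatorname{span}(\alpha_1,\alpha_2)$ and route through oddness; both arguments are the same bookkeeping with a different auxiliary vector. One small slip: in your three-term preliminary, the decomposition $\alpha_1+(z\alpha_i+w\alpha_j)$ is not usable until you know $\varphi(P_{\alpha_i+z\alpha_j})$; replace it by the symmetric split $(\alpha_1+w\alpha_j)+z\alpha_i$ (or establish the $\alpha_i+z\alpha_j$ formula first, as you in fact do), and everything goes through.
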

\begin{proof}
Clearly $f$ is a bijection and $f(0)=0$. We only need to prove the result for the case that $z_1, z_2$ are both nonzero.

Take $\alpha = (\alpha_1 + (z_1+z_2)\alpha_2) + \alpha_3 = (\alpha_1+z_1\alpha_2)+(\alpha_3 + z_2 \alpha_2) = (\alpha_1 + \alpha_3) + (z_1+z_2) \alpha_2$.
Then $\varphi(P_\alpha) \le P_{\beta_1 + f(z_1 + z_2)\beta_2} \vee P_{\beta_3}$, $\varphi(P_\alpha) \le P_{\beta_1+f(z_1)\beta_2} \vee P_{\beta_3 + f(z_2) \beta_2}$ and $\varphi(P_\alpha) \le P_{\beta_1+\beta_3} \vee P_{\beta_2}$.
A similar calculation as in the proof of Lemma \ref{lem 2.11} gives that $f(z_1 + z_2) =f(z_1) + f(z_2)$.

Take $\xi = \alpha_1 + z_1z_2 \alpha_2 + z_1 \alpha_3$. It follows that $P_{\xi} \le P_{\alpha_1 + z_1z_2 \alpha_2} \vee P_{\alpha_3},
P_{\xi} \le P_{\alpha_1} \vee P_{\alpha_3+z_2 \alpha_2}$ and $P_{\xi} \le P_{\alpha_1 + z_1 \alpha_3} \vee P_{\alpha_2}$. Hence we have
$$\varphi(P_{\xi}) \le P_{\beta_1 + f(z_1z_2) \beta_2} \vee P_{\beta_3}, \varphi(P_{\xi}) \le P_{\beta_1} \vee P_{\beta_3 + f(z_2) \beta_2}, \varphi(P_{\xi}) \le P_{\beta_1 + f(z_1)\beta_3} \vee P_{\beta_2}.$$
It follows that there exist constants $c_1, c_2, c_3 \in \mathbb{C}$ such that
$$\beta_1 +  f(z_1z_2)\beta_2 + c_1 \beta_3, \beta_1 +  c_2(\beta_3 + f(z_2) \beta_2), \beta_1 + f(z_1)\beta_3 + c_3 \beta_2$$
are vectors in the range of $\varphi(P_{\xi})$. By the fact that $\beta_1, \beta_2, \beta_3$ are linearly independent, we have
$$\beta_1 +  f(z_1z_2)\beta_2 + c_1 \beta_3=\beta_1 +  c_2(\beta_3 + f(z_2) \beta_2)=\beta_1 + f(z_1)\beta_3 + c_3 \beta_2,$$
which implies that $f(z_1z_2) = f(z_1)f(z_2)$.
\end{proof}

\begin{corollary} \label{cor 2.14}
For any two distinguished numbers $i, j \in \{1,2,\dots, n\}$,
$$\varphi(P_{\alpha_i + z\alpha_j}) = P_{\beta_i + f(z) \beta_j}.$$
\end{corollary}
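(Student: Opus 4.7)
The plan is to reduce to the case $i,j\in\{2,3,\dots,n\}$ with $i\neq j$ and $z\neq 0$, then mimic the three-way decomposition technique used in Lemma \ref{lem 2.13}. The case $i=1$ is exactly Lemma \ref{lem 2.13}, and $z=0$ is trivial since $f(0)=0$ gives $\varphi(P_{\alpha_i})=P_{\beta_i}$. For the case $j=1$, $i\neq 1$, $z\neq 0$, I would observe that as projections $P_{\alpha_i+z\alpha_1}=P_{\alpha_1+(1/z)\alpha_i}$, so Lemma \ref{lem 2.13} gives $\varphi(P_{\alpha_i+z\alpha_1})=P_{\beta_1+f(1/z)\beta_i}$, and by Lemma \ref{lem 2.16} the ring automorphism $f$ satisfies $f(1/z)=1/f(z)$, which rewrites the right-hand side as $P_{f(z)\beta_1+\beta_i}=P_{\beta_i+f(z)\beta_1}$, as required.

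For the remaining case, consider the auxiliary vector $\alpha=\alpha_1+\alpha_i+z\alpha_j$ and write it in three ways:
\[
\alpha=(\alpha_1+\alpha_i)+z\alpha_j=(\alpha_1+z\alpha_j)+\alpha_i=\alpha_1+(\alpha_i+z\alpha_j).
\]
By Corollary \ref{cor 2.8} together with Remark \ref{rem 2.10}, $\varphi$ preserves the join operation, so $\varphi(P_\alpha)$ lies below each of
\[
P_{\beta_1+\beta_i}\vee P_{\beta_j},\qquad P_{\beta_1+f(z)\beta_j}\vee P_{\beta_i},\qquad P_{\beta_1}\vee \varphi(P_{\alpha_i+z\alpha_j}).
\]
From the first two inclusions, the one-dimensional range of $\varphi(P_\alpha)$ must contain vectors of the form $(\beta_1+\beta_i)+c_1\beta_j$ and $(\beta_1+f(z)\beta_j)+c_2\beta_i$; since $\beta_1,\beta_i,\beta_j$ are linearly independent, equating these forces $c_1=f(z)$ and $c_2=1$, giving $\varphi(P_\alpha)=P_{\beta_1+\beta_i+f(z)\beta_j}$.

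Finally, since $P_{\alpha_i+z\alpha_j}\le P_{\alpha_i}\vee P_{\alpha_j}$, we may write $\varphi(P_{\alpha_i+z\alpha_j})=P_{a\beta_i+b\beta_j}$ for some scalars $a,b$ (not both zero). The third inclusion above then says $\beta_1+\beta_i+f(z)\beta_j$ is a linear combination of $\beta_1$ and $a\beta_i+b\beta_j$; matching coefficients gives $b/a=f(z)$, so $\varphi(P_{\alpha_i+z\alpha_j})=P_{\beta_i+f(z)\beta_j}$. There is no serious obstacle here: the argument is a direct analogue of Lemma \ref{lem 2.13}, and the only small subtlety is remembering to handle $j=1$ separately by rescaling and invoking the multiplicativity of $f$.
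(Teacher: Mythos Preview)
Your proposal is correct and follows essentially the same route as the paper: the same case reduction (handling $i=1$ via Lemma~\ref{lem 2.13}, $z=0$ trivially, and $j=1$ via rescaling and the multiplicativity of $f$), and for $i,j\in\{2,\dots,n\}$ the same auxiliary vector $\alpha=\alpha_1+\alpha_i+z\alpha_j$ decomposed the same three ways to force $\varphi(P_\alpha)=P_{\beta_1+\beta_i+f(z)\beta_j}$ and then pin down $\varphi(P_{\alpha_i+z\alpha_j})$. You have simply spelled out the coefficient-matching that the paper leaves implicit.
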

\begin{proof}
We may assume that $i \ne 1$ and $z \ne 0$. If $j =1$, then $\varphi(P_{\alpha_i + z\alpha_j}) = \varphi(P_{\alpha_1 +\frac{1}{z}\alpha_i}) = P_{\beta_1 + f(\frac{1}{z})\beta_i}$. By
Lemma \ref{lem 2.16} we obtain that $f(\frac{1}{z}) = \frac{1}{f(z)}$. Hence $P_{\beta_1 + f(\frac{1}{z})\beta_i} = P_{\beta_i+ z \beta_1}$.

Assume that $i, j$ are distinguished numbers in $\{ 2,3,\dots, n \}$. Clearly $\varphi(P_{\alpha_i + z\alpha_j}) \le P_{\beta_i} \vee P_{\beta_j}$. Take $\alpha = (\alpha_1 + z\alpha_j) + \alpha_i = (\alpha_1 + \alpha_i) + z\alpha_j = (\alpha_1 + \alpha_i + z\alpha_j)$.
Combining $\varphi(P_\alpha) \le P_{\beta_1 + f(z)\beta_j} \vee P_{\beta_i}, \varphi(P_\alpha) \le P_{\beta_1 + \beta_i} \vee P_{\beta_j}$ with
$\varphi(P_\alpha) \le P_{\beta_1} \vee \varphi(P_{\alpha_i+z\alpha_j})$, we obtain the required result.
\end{proof}

By Corollary \ref{cor 2.14}, a similar argument as in the proof of Lemma \ref{lem 2.11} and Lemma \ref{lem 2.13} yields the following result(a mathematical induction is needed).
We omit its proof.

\begin{lemma} \label{lem 2.15}
For any $z_2, z_3, \dots, z_n \in \mathbb C$ and any perturbation $i_1, i_2, \dots, i_n$ of $1, 2, \dots, n$,
$$\varphi(P_{\alpha_{i_{1}} + z_2\alpha_{i_2} + \dots +z_n \alpha_{i_{n}}} ) = P_{\beta_{i_1} + f(z_2) \beta_{i_2} + \dots + f(z_n) \beta_{i_{n}}}.$$
\end{lemma}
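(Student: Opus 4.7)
The plan is to induct on $k$, the number of terms in the sum (equivalently, the number of indices $i_j$ that occur), using Corollary \ref{cor 2.14} as the base case $k=2$ and exploiting two key facts already established: $\varphi$ preserves joins (Corollary \ref{cor 2.8} applied to $\varphi^{-1}$, cf.\ Remark \ref{rem 2.10}) and $f$ is a ring automorphism of $\mathbb{C}$ (Lemma \ref{lem 2.16}). Since $f(0)=0$, any vanishing coefficient simply drops the corresponding term on both sides, so I may assume without loss of generality that all of $z_2,\ldots,z_k$ are nonzero. After relabeling (and accepting that the inductive hypothesis covers all permutations of smaller length), fix distinct $i_1,\ldots,i_k$ in $\{1,\ldots,n\}$ and set $\xi=\alpha_{i_1}+z_2\alpha_{i_2}+\cdots+z_k\alpha_{i_k}$ with $k\ge 3$.

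For the inductive step I mimic the pattern of Lemmas \ref{lem 2.11} and \ref{lem 2.13} by using two decompositions of $\xi$ whose summands are shorter. Decomposition (A): $\xi=\alpha_{i_1}+(z_2\alpha_{i_2}+\cdots+z_k\alpha_{i_k})$, where the second summand, being a nonzero scalar multiple of $\alpha_{i_2}+(z_3/z_2)\alpha_{i_3}+\cdots+(z_k/z_2)\alpha_{i_k}$, is handled by the inductive hypothesis to give $\varphi(P_{z_2\alpha_{i_2}+\cdots+z_k\alpha_{i_k}})=P_{\beta_{i_2}+f(z_3/z_2)\beta_{i_3}+\cdots+f(z_k/z_2)\beta_{i_k}}$. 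Decomposition (B): $\xi=(\alpha_{i_1}+z_2\alpha_{i_2})+(z_3\alpha_{i_3}+\cdots+z_k\alpha_{i_k})$, where the first summand yields $P_{\beta_{i_1}+f(z_2)\beta_{i_2}}$ by Corollary \ref{cor 2.14}, and the second yields $P_{\beta_{i_3}+f(z_4/z_3)\beta_{i_4}+\cdots+f(z_k/z_3)\beta_{i_k}}$ by the inductive hypothesis. Because $\varphi$ preserves joins, the one-dimensional range of $\varphi(P_\xi)$ lies in both of the two resulting two-dimensional subspaces.

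The final step is the coefficient-matching argument. Any vector spanning the range of $\varphi(P_\xi)$ can be written as $a\beta_{i_1}+b(\beta_{i_2}+f(z_3/z_2)\beta_{i_3}+\cdots+f(z_k/z_2)\beta_{i_k})$ from (A) and simultaneously as $c(\beta_{i_1}+f(z_2)\beta_{i_2})+d(\beta_{i_3}+f(z_4/z_3)\beta_{i_4}+\cdots+f(z_k/z_3)\beta_{i_k})$ from (B). Equating these two expressions and using linear independence of $\beta_1,\ldots,\beta_n$, the coefficient of $\beta_{i_1}$ forces $a=c$, the coefficient of $\beta_{i_2}$ forces $b=f(z_2)a$, and the coefficient of $\beta_{i_3}$ forces $d=bf(z_3/z_2)=f(z_2)f(z_3/z_2)a=f(z_3)a$ by Lemma \ref{lem 2.16}. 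For each $j\ge 4$, the remaining matchings $bf(z_j/z_2)=df(z_j/z_3)$ reduce to the identity $f(z_2)f(z_j/z_2)=f(z_j)=f(z_3)f(z_j/z_3)$, which again holds because $f$ is a ring automorphism. The spanning vector is therefore (up to scaling) $\beta_{i_1}+f(z_2)\beta_{i_2}+f(z_3)\beta_{i_3}+\cdots+f(z_k)\beta_{i_k}$, finishing the induction.

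The main obstacle is the bookkeeping in the matching step rather than any conceptual novelty: one must verify that the two decompositions together uniquely pin down each $f(z_j)$ rather than only the ratios $f(z_j)/f(z_2)$ or $f(z_j)/f(z_3)$, and that the multiplicative identity $f(z_2)f(z_j/z_2)=f(z_3)f(z_j/z_3)=f(z_j)$ is exactly what makes the two parametrizations of the range compatible. Once the argument is written out cleanly for $k=3$ (which is a direct analog of Lemma \ref{lem 2.11} with the extra factor $f(z_j)$ tracked), the general inductive step is literally the same calculation with more indices, which is presumably why the authors chose to omit it.
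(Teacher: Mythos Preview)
Your argument is correct and matches the approach the paper indicates but omits: induction on the number of nonzero terms, with Corollary~\ref{cor 2.14} as the base case and the two--decomposition/coefficient--matching trick of Lemmas~\ref{lem 2.11} and \ref{lem 2.13} for the inductive step, the multiplicativity of $f$ from Lemma~\ref{lem 2.16} being exactly what reconciles the two parametrizations. Nothing further is needed.
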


By Lemma \ref{lem 2.16} and Lemma \ref{lem 2.15}, we have the following corollary directly.

\begin{corollary} \label{cor 2.17}
 For any $z_1, z_2, z_3, \dots, z_n \in \mathbb C$ which are not all zero,
 $$\varphi(P_{z_1\alpha_1 + z_2\alpha_2 + \dots +z_n \alpha_n} ) = P_{f(z_1)\beta_1 + f(z_2) \beta_2 + \dots + f(z_n) \beta_n}.$$
\end{corollary}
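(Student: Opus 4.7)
The plan is to show that Corollary \ref{cor 2.17} is essentially immediate from the preceding two results, by normalizing a nonzero vector so that one of its coordinates becomes $1$ and then invoking Lemma \ref{lem 2.15} and the ring-homomorphism property of $f$ from Lemma \ref{lem 2.16}.

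First I would observe that, since not all of the scalars $z_1, z_2, \dots, z_n$ vanish, there is some index $i_0 \in \{1, 2, \dots, n\}$ with $z_{i_0} \ne 0$. The one-dimensional range of the projection $P_{z_1 \alpha_1 + z_2 \alpha_2 + \dots + z_n \alpha_n}$ is unchanged by scalar multiplication, so we may factor out $z_{i_0}$ and write
\[
P_{z_1 \alpha_1 + z_2 \alpha_2 + \dots + z_n \alpha_n} \;=\; P_{\alpha_{i_0} + \sum_{j \ne i_0} (z_j / z_{i_0})\, \alpha_{j}}.
\]
Choosing any permutation $i_1, i_2, \dots, i_n$ of $1, 2, \dots, n$ that starts with $i_1 = i_0$ puts this vector into the exact shape handled by Lemma \ref{lem 2.15}, giving
\[
\varphi\bigl(P_{z_1 \alpha_1 + \dots + z_n \alpha_n}\bigr) \;=\; P_{\beta_{i_0} + \sum_{j \ne i_0} f(z_j / z_{i_0})\, \beta_{j}}.
\]

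Next I would use Lemma \ref{lem 2.16}: since $f$ is a ring automorphism of $\mathbb{C}$ with $f(1) = 1$, it is multiplicative and respects inversion, so $f(z_j / z_{i_0}) = f(z_j) / f(z_{i_0})$ for each $j \ne i_0$. Scaling the representative vector by $f(z_{i_0})$ (which is nonzero since $f$ is a bijection fixing $0$) and again using invariance of the one-dimensional range under scalar multiples, we get
\[
P_{\beta_{i_0} + \sum_{j \ne i_0} f(z_j / z_{i_0})\, \beta_{j}} \;=\; P_{f(z_{i_0}) \beta_{i_0} + \sum_{j \ne i_0} f(z_j)\, \beta_{j}} \;=\; P_{f(z_1)\beta_1 + f(z_2)\beta_2 + \dots + f(z_n)\beta_n},
\]
which is the desired formula.

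There is no real obstacle here: the work has all been done in Lemma \ref{lem 2.15} (treating the case where one coefficient is $1$) and Lemma \ref{lem 2.16} (establishing the ring-homomorphism property). The only mild point to check is that the reduction step is well-defined, i.e.\ that we may always choose $i_0$ with $z_{i_0} \ne 0$ and that $f$ takes this nonzero scalar to a nonzero scalar, both of which are immediate from the hypothesis and from $f$ being a bijection fixing $0$. Thus the corollary follows.
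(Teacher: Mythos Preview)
Your argument is correct and matches the paper's own treatment: the paper simply states that the corollary follows directly from Lemma \ref{lem 2.15} and Lemma \ref{lem 2.16}, and your normalization-then-rescale argument is exactly how one makes that inference explicit.
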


\begin{definition} \label{def 2.18}
Let $\alpha_1, \alpha_2, \dots, \alpha_n$ be an orthonormal basis and $\beta_1, \beta_2, \dots, \beta_n$ a basis for $\mathcal H$. For any ring automorphism
$f: \mathbb C \to \mathbb C$, define a map $\hat{f}: \mathcal H \to \mathcal H$ (depending on $\alpha_1, \alpha_2, \dots, \alpha_n$ and $\beta_1, \beta_2, \dots, \beta_n$) as follows:
$$\hat{f}(z_1 \alpha_1+ z_2 \alpha_2+ \dots + z_n \alpha_n) = f(z_1)\beta_1 + f(z_2) \beta_2 + \dots + f(z_n) \beta_n.$$
\end{definition}

\begin{remark} \label{rem 2.19}
Given an orthonormal basis $\alpha_1, \alpha_2, \dots, \alpha_n$ and a basis $\beta_1, \beta_2, \dots, \beta_n$ for $\mathcal H$, we define a map
$\psi: \mathcal P(\mathcal H) \to \mathcal P(\mathcal H)$ by $\psi(P)(\mathcal H) = \{\hat{f}(P \xi): \xi \in \mathcal H\}$, where $\hat{f}$ is defined in Definition \ref{def 2.18}.
It is clear that $\psi$ is bijective with
$$\psi(P \vee Q) = \psi(P) \vee \psi(Q), \psi(P \wedge Q) = \psi(P) \wedge \psi(Q)$$
for any $P, Q \in \mathcal P(\mathcal H)$. Notice that for any positive integer $k$ and any matrix $ (a_{ij})_{k \times k} \in M_k (\mathbb C)$,
$ det((f(a_{ij})))  = f(det((a_{ij})))$. Therefore $det (f(a_{ij}))  \ne 0$ if and only if $det(a_{ij}) \ne 0$. Hence $r(\psi(P)) = r(P)$ for any $P \in \mathcal P(\mathcal H)$.
Therefore $\psi(P) \vee \psi(Q) = I$ if and only if $P \vee Q=I$ and, $\psi(P) \wedge \psi(Q)=0$ if and only if $P \wedge Q =0$.
We obtain that $\psi$ is joint spectrum preserving for any two projections.
\end{remark}

\vspace{0.2cm}

{\bf Proof of Theorem \ref{main thm 1}.} Theorem \ref{thm 2.9} gives that $(1)$ and $(2)$ are equivalent. $(3) \Rightarrow (2)$ follows directly from Remark \ref{rem 2.19}. Now we only need to show that $(2) \Rightarrow (3)$. Assume that $\varphi$ preserves the joint spectrum of any two projections. It follows from Corollary \ref{cor 2.17} and the definition of $\hat{f}$ that $\varphi(P) = \{\hat{f}(P \xi): \xi \in \mathcal H\}, \forall \ P \in \mathcal P_1(\mathcal H)$. By the fact that $\hat{f}(\eta_1 + \eta_2) = \hat{f}(\eta_1)+\hat{f}(\eta_2)$ for any $\eta_1, \eta_2 \in \mathcal H$ and $\varphi(P \vee Q) = \varphi(P) \vee \varphi(Q)$, we obtain that
$\varphi(P) = \{\hat{f}(P \xi): \xi \in \mathcal H\}, \forall \ P \in \mathcal P(\mathcal H)$.

\section{Maps shrinking the joint spectrum of more than two projections}
Assume that $n \ge 3$. In this section we look at a surjective map $\varphi$ on $\mathcal P(\mathcal H)$ which shrinks the joint spectrum of any $k$ projections,
where $k \ge 3$. It is easy to verify that $\varphi$ also shrinks the joint spectrum of any $2$ projections and thus $\varphi$ is also induced by a ring automorphism $f$ on $\mathbb C$ as in Theorem \ref{main thm 1}.

\begin{lemma} \label{lem 3.1}
Assume that $\varphi: \mathcal P(\mathcal H) \to \mathcal P(\mathcal H)$ is a surjective map which shrinks the joint spectrum of any $3$ projections.
Then $\varphi$ preserves the orthogonality, i.e., if $PQ=0$, then $\varphi(P) \varphi(Q)=0$. In particular, the vectors $\beta_1, \beta_2, \ldots, \beta_n$ in Theorem \ref{main thm 1} are mutually orthogonal in $\mathcal H$.
\end{lemma}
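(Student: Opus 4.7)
The plan is first to prove the rank one case—that $\varphi(P_\xi)\varphi(P_\eta)=0$ whenever $\xi,\eta$ are orthogonal unit vectors in $\mathcal H$—and then to pass to arbitrary orthogonal pairs using the lattice-preservation established in Corollary~\ref{cor 2.8} and Remark~\ref{rem 2.10}. For the latter step, $PQ=0$ forces $P\mathcal H\perp Q\mathcal H$, so every $\xi\in P\mathcal H$ and $\eta\in Q\mathcal H$ are orthogonal. The inclusions $P_\xi\le P$ and $P_\eta\le Q$ give $\varphi(P_\xi)\le\varphi(P)$ and $\varphi(P_\eta)\le\varphi(Q)$; since $\varphi(P)\mathcal H$ is the closed linear span of the $\varphi(P_\xi)\mathcal H$ as $\xi$ varies over $P\mathcal H$ (and similarly for $Q$), once every cross pair of rank one images is orthogonal we obtain $\varphi(P)\mathcal H\perp\varphi(Q)\mathcal H$. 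The ``in particular'' clause then follows by taking $P=P_{\alpha_i}$, $Q=P_{\alpha_j}$ for $i\neq j$.

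For the rank one case I would exploit the triple $(P_\xi,P_\eta,I)$. By Lemma~\ref{lem 2.1}, $\varphi(I)=I$, and by Theorem~\ref{main thm 1}, $\varphi(P_\xi)=P_{\hat f(\xi)}$ and $\varphi(P_\eta)=P_{\hat f(\eta)}$. A direct diagonalization of $aP_\xi+bP_\eta+cI$ in an orthonormal basis extending $\{\xi,\eta\}$ gives eigenvalues $a+c,\,b+c,\,c,\dots,c$, so
$$\sigma([P_\xi,P_\eta,I])=\{(a,b,c):c(a+c)(b+c)=0\}.$$
On the image side, bijectivity of $\varphi$ shows $\hat f(\xi)$ and $\hat f(\eta)$ span distinct lines; letting $S=\sin\theta\in(0,1]$ denote the sine of the angle between these two lines and restricting $aP_{\hat f(\xi)}+bP_{\hat f(\eta)}$ to their two-dimensional span, a trace/determinant computation yields eigenvalues $\bigl((a+b)\pm\sqrt{(a+b)^2-4abS^2}\bigr)/2$. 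Adding $cI$ and noting that the orthogonal complement contributes $c$ with multiplicity $n-2$ produces
$$\sigma([\varphi(P_\xi),\varphi(P_\eta),I])=\{c=0\}\cup\{c^2+c(a+b)+abS^2=0\}.$$

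The shrinking hypothesis requires this set to lie inside $\{c=0\}\cup\{c^2+c(a+b)+ab=0\}$. Specializing to $a=b=1$, the image quadric reduces to the two roots $c=-1\pm\sqrt{1-S^2}$, while the original spectrum becomes $\{0,-1\}$; a direct case check shows that both roots can lie in $\{0,-1\}$ only when $S=1$, yielding $\hat f(\xi)\perp\hat f(\eta)$ and therefore $\varphi(P_\xi)\varphi(P_\eta)=0$. The main subtlety is that the plain two-projection spectrum $\sigma([P_\xi,P_\eta])$ is uninformative in dimension $\ge 3$ because $aP_\xi+bP_\eta$ is always singular; inserting the identity as the third projection is the decisive trick, converting this rank deficiency into a quadratic factor in $c$ that is sensitive to the angle $\theta$ between the image lines.
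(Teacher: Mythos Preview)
Your argument is correct, but it takes a different route from the paper's.  The paper argues by contradiction with a general orthogonal pair $P,Q$, first reducing to the case $P+Q=I$ (via the order--preservation of Section~2), then manufacturing a specific rank one projection $R$ with $\xi+\varphi(Q)\xi$ in its range so that the single triple $(1,1,-\tfrac{1+3c^2}{1+c^2})$ lies in $\sigma([\varphi(P),\varphi(Q),R])$ but not in $\sigma([P,Q,\varphi^{-1}(R)])$.  You instead reduce to rank one projections first (using the lattice identity $\varphi(P)=\bigvee_{\xi\in P\mathcal H}\varphi(P_\xi)$, which is legitimate by Remark~\ref{rem 2.10}) and then take the third projection to be $I$, computing the \emph{entire} joint spectrum on both sides and comparing the quadratic factors in $c$.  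The paper's trick is shorter and avoids the angle computation; your approach is more explicit, makes the dependence on the angle between the image lines completely transparent, and has the pleasant feature that the ``extra'' projection is the canonical one $I$ rather than an ad hoc construction pulled back through $\varphi^{-1}$.  Both approaches lean on the same Section~2 input (bijectivity and rank/order preservation of $\varphi$).
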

\begin{proof}
By way of contradiction, we assume that there exist two nonzero projections $P, Q$ on $\mathcal H$ with $PQ=0$ and $\varphi(P) \varphi(Q) \ne 0$.
By the arguments in Section 2 we have $\varphi$ preserves the order of projections.
Replacing $P$ by $I-Q$ if necessary, we may assume that $P + Q = I$ and $\varphi(P) \varphi(Q) \ne 0$. It follows that there exist a unit vector $ \xi \in \varphi(P)(\mathcal H)$
such that $\varphi(Q) \xi \ne 0$. Let $c = \Vert\varphi(Q) \xi \Vert >0$. It is easy to verify that
$$\Vert \xi + \varphi(Q) \xi \Vert^2 = 1+3 c^2, \langle \xi, \xi + \varphi(Q) \xi \rangle = 1+c^2.$$
Take a rank one projection $R$ with $\xi + \varphi(Q) \xi$ in its range. It follows that $R \xi = \frac{1+c^2}{1+3c^2} (\xi + \varphi(Q)\xi)$. Hence we obtain that
$(\varphi(P) + \varphi(Q) -\frac{1+3c^2}{1+c^2}R) \xi =0$
and thus $\varphi(P) + \varphi(Q) - \frac{1+3c^2}{1+c^2} R$ is not invertible. Therefore $(1, 1, -\frac{1+3c^2}{1+c^2}) \in \sigma([\varphi(P), \varphi(Q), R])$.
However, $\frac{1+3c^2}{1+c^2} > 1$, $P+Q- \frac{1+3c^2}{1+c^2} \varphi^{-1}(R) = I - \frac{1+3c^2}{1+c^2} \varphi^{-1}(R)$ is invertible and
$(1, 1, -\frac{1+3c^2}{1+c^2}) \notin \sigma([P, Q, \varphi^{-1}(R)])$. We obtain a contradiction.

It is easy to verify that $\beta_1, \beta_2, \ldots, \beta_n$ in Theorem \ref{main thm 1} are mutually orthogonal in  $\mathcal H$.
\end{proof}

\begin{remark}
Notice that for each $i\in\{2,\ldots,n\}$, the projections $P_{\alpha_1+\alpha_i}$ and $P_{\alpha_1-\alpha_i}$ are orthogonal. Thus $\varphi(P_{\alpha_1+\alpha_i})$ and $\varphi(P_{\alpha_1-\alpha_i})$ are orthogonal. Then we have
$\langle \beta_1+\beta_i,\beta_1-\beta_i\rangle=0$, which implies that $\|\beta_1\|=\|\beta_i\|$ for any $i\in\{2,\ldots,n\}$. Thus replacing each $\beta_i$ by $\frac{\beta_i}{\|\beta_1\|}$ for $i=1,2,\ldots,n$, we may assume that $\beta_1,\beta_2,\ldots,\beta_n$ form an orthonormal basis for $\mathcal{H}$.
\end{remark}

\begin{lemma}\label{lem 3.2}
Assume that $\varphi: \mathcal P(\mathcal H) \to \mathcal P(\mathcal H)$ is a surjective map which shrinks the joint spectrum of any $3$ projections. Then either
$$f(z)=z, \ \forall \ z \in \mathbb C$$
or
$$f(z) = \bar{z}, \ \forall \ z \in \mathbb C,$$
where $f$ is the map defined in Section 2.
\end{lemma}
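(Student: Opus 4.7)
My proposal is to exploit the orthogonality-preservation from Lemma \ref{lem 3.1}, now upgraded by the Remark (so that $\beta_1,\ldots,\beta_n$ is orthonormal), combined with the explicit formula from Corollary \ref{cor 2.17}. The orthogonality of rank one projections $P_\xi, P_\eta$ is equivalent to $\langle \xi,\eta\rangle = 0$; with $\beta_1,\ldots,\beta_n$ orthonormal, the image side of $\varphi$ converts into a clean conjugate-bilinear identity in $f$.

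Concretely, I would first pick any nonzero $z\in\mathbb{C}$ and observe
$$\langle \alpha_1 + z\alpha_2,\, \alpha_1 - \tfrac{1}{\bar z}\alpha_2\rangle \;=\; 1 - z\cdot\overline{\tfrac{-1}{\bar z}} \;=\; 0,$$
so $P_{\alpha_1+z\alpha_2}\perp P_{\alpha_1-\frac{1}{\bar z}\alpha_2}$. Applying $\varphi$ and using Lemma \ref{lem 3.1} together with Corollary \ref{cor 2.17} gives $P_{\beta_1+f(z)\beta_2}\perp P_{\beta_1+f(-1/\bar z)\beta_2}$, and since $\{\beta_1,\beta_2\}$ is orthonormal this forces $1+f(z)\,\overline{f(-1/\bar z)}=0$. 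Because $f$ is a ring automorphism (Lemma \ref{lem 2.16}), $f(-1/\bar z)=-1/f(\bar z)$, and substituting yields
$$f(\bar z) \;=\; \overline{f(z)} \qquad (\forall\, z\in\mathbb{C}).$$

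From this identity I would deduce that $f$ restricts to a ring automorphism of $\mathbb{R}$. Indeed, $x\in\mathbb{R}$ gives $f(x)=f(\bar x)=\overline{f(x)}$, so $f(\mathbb{R})\subseteq\mathbb{R}$; the same argument applied to $f^{-1}$ (which inherits the conjugation property) gives $f(\mathbb{R})=\mathbb{R}$. A ring automorphism of $\mathbb{R}$ fixes $\mathbb{Q}$, sends squares to squares and hence nonnegatives to nonnegatives, so it is order-preserving, and order-preserving plus fixing the dense set $\mathbb{Q}$ forces $f|_{\mathbb{R}}=\mathrm{id}$. Finally, $f(i)^2=f(-1)=-1$ gives $f(i)=\pm i$, which together with $\mathbb{R}$-linearity over the reals yields $f=\mathrm{id}$ or $f(z)=\bar z$.

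I do not expect any real obstacle: the only subtle point is that ring automorphisms of $\mathbb{C}$ can be wild, so one must genuinely use the extra information (here, the preservation of complex conjugation coming from orthogonality in a fixed orthonormal basis) to pin $f$ down on $\mathbb{R}$. Once that is secured, the final step is formal.
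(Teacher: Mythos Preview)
Your argument is correct. The overall architecture matches the paper's proof: both use Lemma~\ref{lem 3.1} (and the Remark making $\beta_1,\dots,\beta_n$ orthonormal) together with Corollary~\ref{cor 2.17} to translate orthogonality of rank one projections into an algebraic constraint on $f$, then combine $f(i)=\pm i$ with $f|_{\mathbb R}=\mathrm{id}$.

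The one genuine difference is the choice of orthogonal pair and the resulting intermediate identity. The paper picks, for $t\ge 0$, the vectors $t\alpha_1+\sqrt t\,\alpha_2$ and $\alpha_1-\sqrt t\,\alpha_2$; orthogonality of the images gives $f(t)=|f(\sqrt t)|^2\ge 0$ directly, and then the squeeze by rationals finishes $f|_{\mathbb R}=\mathrm{id}$. You instead take an arbitrary nonzero $z\in\mathbb C$ and the pair $\alpha_1+z\alpha_2$, $\alpha_1-\tfrac{1}{\bar z}\alpha_2$, obtaining the clean identity $f(\bar z)=\overline{f(z)}$; from this, $f(\mathbb R)\subseteq\mathbb R$ is immediate, and positivity comes from the standard ``squares to squares'' argument before the same rational squeeze. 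Your route isolates the conjugation-equivariance of $f$ as a separate fact, which is conceptually neat; the paper's route is slightly more direct in reaching positivity. Either way the endgame is identical. (One small cosmetic point: in your displayed inner-product computation the sign bookkeeping is a bit garbled, though the conclusion $=0$ is correct; it reads more cleanly as $1+z\cdot\overline{(-1/\bar z)}=1-1=0$.)
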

\begin{proof}
Notice that $f(i)^2 = f(i^2) = f(-1) =-1$. We have either $f(i) =i$ or $f(i)=-i$. In the following we only need to show that $f(t) = t$ for all $t \in \mathbb R$.
We first assume that $t \ge 0$. Let $P_1, P_2$ be rank one projections such that $t \alpha_1 + \sqrt{t} \alpha_2 \in P_1(\mathcal H)$ and
$ \alpha_1 - \sqrt{t} \alpha_2 \in P_2(\mathcal H)$. By Theorem \ref{main thm 1}, we have $f(t)\beta_1 + f(\sqrt{t}) \beta_2 \in \varphi(P_1)(\mathcal H)$ and
$\beta_1 - f(\sqrt{t}) \beta_2 \in \varphi(P_2)(\mathcal H)$. Notice that $P_1 P_2 =0$. By Lemma \ref{lem 3.1},  $\varphi(P) \varphi(Q) =0$.
Hence $\langle f(t)\beta_1 + f(\sqrt{t}) \beta_2,  \beta_1 - f(\sqrt{t}) \beta_2  \rangle = 0$. Therefore
$$f(t) = \vert f(\sqrt{t})\vert^2 \ge 0.$$

Assume that $s_1 < t < s_2$, where $s_1, s_2$ are rational numbers. Note that it is easy to verify that $f(s) = s$ for every rational number  $s \in \mathbb R$. Then
$$s_2-f(t) = f(s_2)-f(t) = f(s_2-t) \ge 0, f(t)-s_1 = f(t)-f(s_1) = f(t-s_1) \ge 0$$
and therefore $s_1 \le f(t) \le s_2$. Hence we have $f(t) = t$ for all $t \in \mathbb R$ and we obtain that (1) if $f(i) = i$, then $f(z) = z$ for all $z \in \mathbb C$ and
(2) if $f(i)=-i$, then $f(z) = \bar{z}$ for all $z \in \mathbb C$.
\end{proof}

\vspace{0.2cm}

{\bf Proof of Theorem \ref{thm 3.3}.}
It is clear that $(5) \Rightarrow (4) \Rightarrow (3) \Rightarrow (1)$ and $(4) \Rightarrow (2) \Rightarrow (1)$. We only need to show that $(1) \Rightarrow (5)$.
It follows from $(1)$ that $\varphi$ shrinks the joint spectrum of any $3$ projections. Hence we may assume $\alpha_1, \alpha_2, \dots, \alpha_n$ and $\beta_1, \beta_2, \dots, \beta_n$ in
Theorem \ref{main thm 1}  are two orthonormal basis for $\mathcal H$. By Lemma \ref{lem 3.2} we have either
$$f(z)=z, \ \forall \ z \in \mathbb C$$
or
$$f(z) = \bar{z}, \ \forall \ z \in \mathbb C,$$
where $f$ is the map defined in Remark \ref{rem def f}. If $f(z) =z$ for all $z \in \mathbb C$, then we define a unitary $U$ by
$U(z_1 \beta_1 + z_2 \beta_2 + \dots + z_n \beta_n) = z_1 \alpha_1 + z_2 \alpha_2 + \dots + z_n \alpha_n$. If $f(z) =\bar{z}$ for all $z \in \mathbb C$, then we define an anti-unitary
$U$ by $U(z_1 \beta_1 + z_2 \beta_2 + \dots + z_n \beta_n) = \bar{z_1} \alpha_1 + \bar{z_2} \alpha_2 + \dots + \bar{z_n} \alpha_n$. It follows from Theorem \ref{main thm 1} that
$\varphi(P) = U^*PU$ in both cases.

\section{Joint spectrum shrinking maps on rank one projections}
Assume that $n \ge 3$. In this section we assume that $\phi: \mathcal P_1 (\mathcal H) \to \mathcal P_1 (\mathcal H)$ is a surjective map. It is easy to verify that for any
positive integer $m < n$, the joint spectrum of any $m$ rank one projections $P_1, P_2, \dots, P_m$ is $\mathbb C^m$. Therefore every map on $\mathcal P_1(\mathcal H)$ preserves
the joint spectrum of any $m$ rank one projections if $m <n$.

\subsection{Maps preserving the joint spectrum of any $n$ rank one projections}
We start with a description of the joint spectrum of $n$ rank one projections.
\begin{lemma} \label{lem 4.1}
Let $P_1, P_2, \dots, P_n \in \mathcal P_1(\mathcal H)$. Then
\begin{enumerate}
\item[(1)] if $P_1 \vee P_2 \vee \dots \vee P_n \ne I$ , then $\sigma([P_1, P_2, \dots, P_n]) = \mathbb C^n$;
\item[(2)] if $P_1 \vee P_2 \vee \dots \vee P_n = I$, then $\sigma([P_1, P_2, \dots, P_n]) = \{(c_1, c_2, \dots, c_n) \in \mathbb C^n: c_1c_2 \dots c_n =0\}$.
\end{enumerate}
\end{lemma}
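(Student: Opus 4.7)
\textbf{Proof plan for Lemma \ref{lem 4.1}.} The strategy is to realize the operator $c_1P_1+\cdots+c_nP_n$ concretely in terms of unit vectors $\xi_i$ spanning the ranges of the $P_i$, and then use a dimension count together with linear independence to determine when it is invertible. Throughout, write $\xi_i$ for a chosen unit vector with $P_i(\mathcal{H})=\mathbb{C}\xi_i$, so that $P_i\eta=\langle\eta,\xi_i\rangle\xi_i$ and hence
\[
(c_1P_1+\cdots+c_nP_n)\eta \;=\; \sum_{i=1}^{n} c_i\langle\eta,\xi_i\rangle\xi_i.
\]

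For part (1), I would observe that the hypothesis $P_1\vee\cdots\vee P_n\neq I$ means $\mathrm{span}\{\xi_1,\ldots,\xi_n\}$ is a proper subspace of $\mathcal{H}$. Choose any nonzero $\eta$ orthogonal to this span; then $\langle\eta,\xi_i\rangle=0$ for every $i$, so $(c_1P_1+\cdots+c_nP_n)\eta=0$ for every choice of coefficients, showing the operator is not invertible and hence $\sigma([P_1,\ldots,P_n])=\mathbb{C}^n$.

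For part (2), the hypothesis $P_1\vee\cdots\vee P_n=I$ forces $\mathrm{span}\{\xi_1,\ldots,\xi_n\}=\mathcal{H}$, and since there are exactly $n$ of them in an $n$-dimensional space, $\{\xi_1,\ldots,\xi_n\}$ is a basis. Then I would handle the two inclusions separately. If some $c_j=0$, then the range of $c_1P_1+\cdots+c_nP_n$ lies inside $\mathrm{span}\{\xi_i:i\neq j\}$, a subspace of dimension at most $n-1$, so the operator is not invertible; this yields the inclusion $\{c_1\cdots c_n=0\}\subseteq \sigma([P_1,\ldots,P_n])$. Conversely, if every $c_i\neq 0$ and $(c_1P_1+\cdots+c_nP_n)\eta=0$, the displayed formula gives $\sum_i c_i\langle\eta,\xi_i\rangle\xi_i=0$; by linear independence of the $\xi_i$ each $c_i\langle\eta,\xi_i\rangle=0$, and since $c_i\neq 0$ we get $\langle\eta,\xi_i\rangle=0$ for all $i$. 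As $\{\xi_i\}$ spans $\mathcal{H}$ this forces $\eta=0$, so the operator is injective and therefore invertible on the finite-dimensional space $\mathcal{H}$. This gives the reverse inclusion and completes the proof.

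No step here is really an obstacle; the only mild subtlety is keeping track of the fact that $n$ rank one projections joining to $I$ in dimension $n$ automatically gives a basis of range vectors, which is what converts the supremum hypothesis into the linear independence needed for the invertibility argument.
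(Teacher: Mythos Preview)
Your proof is correct and follows essentially the same approach as the paper's. The only cosmetic differences are that in part (1) you argue via the kernel (picking $\eta$ orthogonal to the span) whereas the paper argues via the range (the image lies in the proper subspace $P_1\vee\cdots\vee P_n$), and in part (2) you phrase the key step as linear independence of the $\xi_i$ while the paper expresses it as $P_i \wedge (\vee_{j\ne i}P_j)=0$; these are equivalent formulations of the same idea.
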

\begin{proof}
If $P_1 \vee P_2 \vee \dots \vee P_n \ne I$, then the range of any linear combination of $P_1, P_2, \dots, P_n$ is contained in the range of $P_1 \vee P_2 \vee \dots \vee P_n$
and thus any linear combination of $P_1, P_2, \dots, P_n$ is not invertible. Therefore $\sigma([P_1, P_2, \dots, P_n]) = \mathbb C^n$.

On the other hand, assume that $P_1 \vee P_2 \vee \dots \vee P_n = I$ and $c_1 P_1 + c_2 P_2 + \dots + c_n P_n$ is not invertible. Then there exists a nonzero vector
$\beta \in \mathcal H$ such that $c_1 P_1 \beta + c_2 P_2 \beta + \dots + c_n P_n \beta =0$. Hence
$c_i P_i \beta = -c_1 P_1 \beta - \dots - c_{i-1}P_{i-1}\beta - c_{i+1}P_{i+1}\beta- \dots - c_nP_n \beta =0$. By the fact that $P_1 \vee P_2 \vee \dots \vee P_n = I$ we have
$P_i \wedge (P_1 \vee \dots \vee P_{i-1} \vee P_{i+1} \vee \dots \vee P_n) = 0$ for each $i \in \{1, 2, \dots, n\}$.
If $c_1 c_2 \dots c_n \ne 0$, then $P_1 \beta = P_2 \beta = \dots = P_n \beta =0$, which is a contradiction to that $P_1 \vee P_2 \vee \dots \vee P_n = I$ and $\beta \ne 0$.
Therefore $\sigma([P_1, P_2, \dots, P_n]) \subseteq \{(c_1, c_2, \dots, c_n) \in \mathbb C^n: c_1c_2 \dots c_n =0\}$. It is obvious that
$\{(c_1, c_2, \dots, c_n) \in \mathbb C^n: c_1c_2 \dots c_n =0\} \subseteq \sigma([P_1, P_2, \dots, P_n])$.
\end{proof}

In order to prove the main result of this subsection, we will extend $\phi$ to a bijective map on $\mathcal P(\mathcal H)$ which preserves the joint spectrum of any two projections.
We give some necessary lemmas.

\begin{lemma} \label{lem 4.2}
Assume that $\phi: \mathcal P_1(\mathcal H) \to \mathcal P_1(\mathcal H)$ is a surjective map which preserves the joint spectrum of any $n$ rank one projections.
Then for any $1 \le k \le n$
and $P_1, P_2, \dots, P_k \in \mathcal P_1(\mathcal H)$,
$r(P_1 \vee P_2 \vee \dots \vee P_k)=k$ if and only if $r(\phi(P_1) \vee \phi(P_2)\vee \dots \vee \phi(P_k))=k$. In particular,  $\phi$ is injective.
\end{lemma}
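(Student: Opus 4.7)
The key observation I would extract from Lemma~\ref{lem 4.1} is that, for any $n$ rank one projections $P_1, \ldots, P_n$, the joint spectrum $\sigma([P_1,\ldots,P_n])$ takes one of only two possible values, namely $\mathbb{C}^n$ or $\{(c_1,\ldots,c_n): c_1c_2\cdots c_n = 0\}$, and which one occurs is determined by whether $P_1 \vee \cdots \vee P_n = I$. Consequently the hypothesis that $\phi$ preserves the joint spectrum of any $n$ rank one projections is equivalent to the single bi-implication
\[
P_1 \vee \cdots \vee P_n = I \ \Longleftrightarrow \ \phi(P_1) \vee \cdots \vee \phi(P_n) = I.
\]
This is the only content I will actually use from the spectrum hypothesis.

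For the forward direction, I would assume $r(P_1 \vee \cdots \vee P_k) = k$ and choose rank one projections $P_{k+1}, \ldots, P_n \in \mathcal{P}_1(\mathcal{H})$ extending $P_1, \ldots, P_k$ to a linearly independent family, so that $P_1 \vee \cdots \vee P_n = I$. The displayed equivalence then gives $\phi(P_1) \vee \cdots \vee \phi(P_n) = I$, forcing the one-dimensional ranges of $\phi(P_1), \ldots, \phi(P_n)$ to be linearly independent in $\mathcal{H}$. In particular, the first $k$ of these ranges are independent, whence $r(\phi(P_1) \vee \cdots \vee \phi(P_k)) = k$.

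For the converse, I would use surjectivity of $\phi$. Suppose $r(\phi(P_1) \vee \cdots \vee \phi(P_k)) = k$ and extend $\phi(P_1), \ldots, \phi(P_k)$ to rank one projections $R_{k+1}, \ldots, R_n$ with $\phi(P_1) \vee \cdots \vee \phi(P_k) \vee R_{k+1} \vee \cdots \vee R_n = I$. By surjectivity, pick $Q_{k+1}, \ldots, Q_n \in \mathcal{P}_1(\mathcal{H})$ with $\phi(Q_i) = R_i$. Applying the bi-implication from the first paragraph to the $n$-tuple $(P_1, \ldots, P_k, Q_{k+1}, \ldots, Q_n)$ yields $P_1 \vee \cdots \vee P_k \vee Q_{k+1} \vee \cdots \vee Q_n = I$, i.e., the total join has rank $n$. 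Since $r(Q_{k+1} \vee \cdots \vee Q_n) \le n-k$, a dimension count forces $r(P_1 \vee \cdots \vee P_k) = k$.

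Finally, injectivity of $\phi$ follows immediately from the $k=2$ case (valid since $n \ge 3$): if $\phi(P) = \phi(Q)$ with $P \ne Q$ in $\mathcal{P}_1(\mathcal{H})$, then $r(P \vee Q) = 2$ but $r(\phi(P) \vee \phi(Q)) = 1$, contradicting the equivalence just proved. The only delicate point in the argument is the use of surjectivity to pull back an extension in the converse direction; once that is available, the rest is a straightforward application of Lemma~\ref{lem 4.1}.
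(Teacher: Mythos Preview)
Your proof is correct and follows essentially the same approach as the paper: both reduce the spectrum-preserving hypothesis via Lemma~\ref{lem 4.1} to the bi-implication $P_1\vee\cdots\vee P_n=I \Leftrightarrow \phi(P_1)\vee\cdots\vee\phi(P_n)=I$, and both handle the forward direction by extending $P_1,\ldots,P_k$ to an $n$-tuple with join $I$. The only cosmetic difference is in the converse: the paper first deduces injectivity from the forward direction (the $k=2$ case) and then applies the forward argument to $\phi^{-1}$, whereas you argue the converse directly using surjectivity and a rank count before turning to injectivity---but these are logically interchangeable.
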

\begin{proof}
We first prove the ``only if" part. Assume that $r(P_1 \vee P_2 \vee \dots \vee P_k)=k$. Then there exist $n-k$ rank one projections $P_{k+1}, P_{k+2}, \dots, P_{n}$ with
$P_1 \vee P_2 \vee \dots \vee P_n = I$. Since $\phi$  preserves the joint spectrum of any $n$ rank one projections, it follows from Lemma \ref{lem 4.1} that
$r(\phi(P_1) \vee \phi(P_2)\vee \dots \vee \phi(P_n))=n$. Hence $r(\phi(P_1) \vee \phi(P_2)\vee \dots \vee \phi(P_k))=k$. If $P, Q$ are two distinguished rank one projections,
then $r(P \vee Q)=2$. It follows that $r(\phi(P) \vee \phi(Q)) =2$ and hence $\phi$ is injective. The other direction follows directly from the fact that $\phi^{-1}$ also preserves
the joint spectrum of any rank one projections.
\end{proof}

\begin{lemma} \label{lem 4.3}
Assume that $\phi: \mathcal P_1(\mathcal H) \to \mathcal P_1(\mathcal H)$ is a surjective map which preserves the joint spectrum of any $n$ rank one projections. Then for any
$1 \le k \le n$ and $P_1, P_2, \dots, P_k \in \mathcal P_1(\mathcal H)$, $r(\phi(P_1) \vee \phi(P_2) \dots \vee \phi(P_k)) = r(P_1 \vee P_2 \dots \vee P_k)$.
\end{lemma}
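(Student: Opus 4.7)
The plan is to first note that the lower bound $r(\phi(P_1) \vee \cdots \vee \phi(P_k)) \geq r(P_1 \vee \cdots \vee P_k)$ is free from Lemma \ref{lem 4.2}: if $s := r(P_1 \vee \cdots \vee P_k)$ and (after reordering) $P_1, \ldots, P_s$ are linearly independent, then Lemma \ref{lem 4.2} gives $r(\phi(P_1) \vee \cdots \vee \phi(P_s)) = s$, and hence $r(\phi(P_1) \vee \cdots \vee \phi(P_k)) \geq s$. The matching upper bound reduces to the following key claim: if $P_1, \ldots, P_s$ are linearly independent rank one projections and $P \in \mathcal P_1(\mathcal H)$ satisfies $P \leq P_1 \vee \cdots \vee P_s$, then $\phi(P) \leq \phi(P_1) \vee \cdots \vee \phi(P_s)$. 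Applied to each redundant $P_{s+1}, \ldots, P_k$, this forces $r(\phi(P_1) \vee \cdots \vee \phi(P_k)) = s$.

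The cases $s = n$ (trivial, both sides being $I$ by Lemma \ref{lem 4.2}) and $s = n-1$ (apply Lemma \ref{lem 4.1} to the $n$-tuple $(P_1, \ldots, P_{n-1}, P)$, whose join has rank $n-1$, then combine joint-spectrum preservation with Lemma \ref{lem 4.2}) are immediate. For $s \leq n-2$, extend $\{P_1, \ldots, P_s\}$ to a linearly independent family of $n-1$ rank one projections by adjoining $R_1, \ldots, R_{n-s-1}$ (possible since $s + (n-s-1) = n-1 < n$). The $n$-tuple $(P_1, \ldots, P_s, R_1, \ldots, R_{n-s-1}, P)$ then has join of rank $n-1$, hence joint spectrum $\mathbb{C}^n$ by Lemma \ref{lem 4.1}. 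Preservation together with Lemma \ref{lem 4.2} yields $\phi(P) \leq V := \phi(P_1) \vee \cdots \vee \phi(P_s) \vee \phi(R_1) \vee \cdots \vee \phi(R_{n-s-1})$, a subspace of rank $n-1$, for every admissible choice of the $R_j$.

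The crux is to shrink $V$ down to $\phi(P_1) \vee \cdots \vee \phi(P_s)$ via a cascade of intersection arguments. Fix $R_1, \ldots, R_{n-s-2}$ and set $W = \phi(P_1) \vee \cdots \vee \phi(P_s) \vee \phi(R_1) \vee \cdots \vee \phi(R_{n-s-2})$, of rank $n-2$. Varying $R_{n-s-1}$ over all rank one projections keeping the full tuple linearly independent and using Lemma \ref{lem 4.2} applied to both $\phi$ and $\phi^{-1}$, one sees that $\phi(R_{n-s-1})$ sweeps out exactly the rank one projections $T$ with $T \not\leq W$; for each such $T$ one has $\phi(P) \leq W \vee T$. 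Since $\dim(\mathcal H / W) = 2$, the intersection $\bigcap_{T \not\leq W}(W \vee T)$ of all hyperplanes above $W$ collapses to $W$ itself, so $\phi(P) \leq W$. Iterating this reduction $n-1-s$ times — at each iteration the target subspace has rank at most $n-2$, so the relevant quotient has dimension at least $2$, which is where the hypothesis $n \geq 3$ enters — produces $\phi(P) \leq \phi(P_1) \vee \cdots \vee \phi(P_s)$, completing the key claim. The main obstacle is executing this cascade cleanly, and in particular verifying at each step that $\phi(R_j)$ genuinely covers all rank one projections outside the current subspace; this rests squarely on Lemma \ref{lem 4.2} used in both directions.
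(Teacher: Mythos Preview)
Your argument is correct, but it takes a much longer path than the paper's. You and the paper agree on the lower bound: pick $s$ independent projections among $P_1,\dots,P_k$ and apply Lemma~\ref{lem 4.2}. For the upper bound, however, the paper simply notes that $\phi$ is bijective and that $\phi^{-1}$ also preserves the joint spectrum of any $n$ rank one projections; applying the \emph{same} lower-bound inequality to $\phi^{-1}$ and to the tuple $\phi(P_1),\dots,\phi(P_k)$ immediately gives $r(P_1\vee\cdots\vee P_k)\ge r(\phi(P_1)\vee\cdots\vee\phi(P_k))$, and the proof is over in two lines.

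By contrast, you establish the upper bound by proving directly that $P\le P_1\vee\cdots\vee P_s$ forces $\phi(P)\le \phi(P_1)\vee\cdots\vee\phi(P_s)$, via the cascade of hyperplane intersections. This is sound (your sweep argument works because Lemma~\ref{lem 4.2} is already an ``if and only if'', and at each stage the quotient has dimension at least $2$), but you are in effect proving the order-preservation statement of the paper's Lemma~\ref{lem 4.4} on the way to Lemma~\ref{lem 4.3}, whereas the paper does the reverse: it gets Lemma~\ref{lem 4.3} for free by symmetry and then deduces Lemma~\ref{lem 4.4} from it in one line. What your approach buys is an explicit, hands-on picture of why the order is preserved; what the paper's approach buys is that the entire upper bound collapses to the single observation that $\phi^{-1}$ satisfies the same hypotheses as $\phi$.
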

\begin{proof}
Denote $s = r(P_1 \vee P_2 \vee \dots \vee P_k)$. It follows easily from Lemma \ref{lem 4.2} that $\phi$ is invertible on $\mathcal P_{1}(H)$ and $\phi^{-1}$ also preserves the
joint spectrum of any tuple of rank one projections. Hence we only need to show that $r(\phi(P_1) \vee \phi(P_2) \vee \dots \vee \phi(P_k)) \ge s$. Since
$s = r(P_1 \vee P_2 \vee \dots \vee P_k)$, there exist $i_1, i_2, \dots, i_s \in \{1, 2, \dots, k\}$ such that $r(P_{i_1} \vee P_{i_2} \vee \dots \vee P_{i_s})=s$. By Lemma \ref{lem 4.2},
$r(\phi(P_{i_1})\vee \phi(P_{i_2}) \vee \dots \vee \phi(P_{i_s}))=s$. Therefore $r(\phi(P_1) \vee \phi(P_2) \vee \dots \vee \phi(P_k)) \ge s$.
\end{proof}

\begin{lemma} \label{lem 4.4}
Assume that $\phi: \mathcal P_1(\mathcal H) \to \mathcal P_1(\mathcal H)$ is a surjective map which preserves the joint spectrum of any $n$ rank one projections.
For any $P, Q_1, Q_2, \dots, Q_k$ in $\mathcal P_1(\mathcal H)$, $P \le Q_1 \vee Q_2 \vee \dots \vee Q_k$ if and only if $\phi(P) \le \phi(Q_1) \vee \phi(Q_2) \vee \dots \vee \phi(Q_k)$.
In particular, if $E_1, E_2, \dots, E_l; F_1, F_2, \dots F_m \in \mathcal P_1(\mathcal H)$, then $E_1 \vee E_2 \vee \dots \vee E_l = F_1 \vee F_2 \vee \dots \vee F_m$ if and only if
$\phi(E_1) \vee \phi(E_2) \vee \dots \vee \phi(E_l) = \phi(F_1) \vee \phi(F_2) \vee \dots \vee \phi(F_m)$.
\end{lemma}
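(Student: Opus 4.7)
The plan is to reduce the containment $P \le Q_1 \vee \dots \vee Q_k$ to a rank identity and then invoke Lemma \ref{lem 4.3}. The guiding observation is that if $P \in \mathcal P_1(\mathcal H)$ and $R \in \mathcal P(\mathcal H)$, then $r(P \vee R) \in \{r(R), r(R)+1\}$, so $P \le R$ if and only if $r(P \vee R) = r(R)$. In particular,
\[
P \le Q_1 \vee \dots \vee Q_k \iff r(P \vee Q_1 \vee \dots \vee Q_k) = r(Q_1 \vee \dots \vee Q_k),
\]
and analogously for $\phi(P), \phi(Q_1), \dots, \phi(Q_k)$.

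First I would dispatch the case $k \le n-1$: both tuples $(Q_1, \dots, Q_k)$ and $(P, Q_1, \dots, Q_k)$ have length at most $n$, so Lemma \ref{lem 4.3} gives
\[
r(\phi(Q_1) \vee \dots \vee \phi(Q_k)) = r(Q_1 \vee \dots \vee Q_k)
\]
and
\[
r(\phi(P) \vee \phi(Q_1) \vee \dots \vee \phi(Q_k)) = r(P \vee Q_1 \vee \dots \vee Q_k).
\]
The rank-equality characterization of $\le$ then transfers the containment in both directions.

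For $k \ge n$ I would first discard redundant terms. Let $s = r(Q_1 \vee \dots \vee Q_k)$; choose indices $i_1 < \dots < i_s$ with $Q_{i_1} \vee \dots \vee Q_{i_s} = Q_1 \vee \dots \vee Q_k$. If $s \le n-1$, then for every $j$ outside $\{i_1, \dots, i_s\}$ the containment $Q_j \le Q_{i_1} \vee \dots \vee Q_{i_s}$ falls inside the range of the first case, so $\phi(Q_j) \le \phi(Q_{i_1}) \vee \dots \vee \phi(Q_{i_s})$; hence $\phi(Q_1) \vee \dots \vee \phi(Q_k) = \phi(Q_{i_1}) \vee \dots \vee \phi(Q_{i_s})$, and applying the first case once more to $P$ against $Q_{i_1}, \dots, Q_{i_s}$ finishes the equivalence. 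If $s = n$, then $Q_1 \vee \dots \vee Q_k = I$ and Lemma \ref{lem 4.3} applied to the $n$-tuple $(Q_{i_1}, \dots, Q_{i_n})$ gives $\phi(Q_{i_1}) \vee \dots \vee \phi(Q_{i_n}) = I$, so both $P \le Q_1 \vee \dots \vee Q_k$ and $\phi(P) \le \phi(Q_1) \vee \dots \vee \phi(Q_k)$ hold trivially. The reverse implication in every case is obtained by running the same argument with $\phi^{-1}$, which by Lemma \ref{lem 4.2} is also a surjective joint-spectrum-preserving map on $\mathcal P_1(\mathcal H)$.

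For the \textquotedblleft in particular\textquotedblright\ statement I would simply note that an equality of joins $E_1 \vee \dots \vee E_l = F_1 \vee \dots \vee F_m$ is equivalent to the conjunction of the containments $E_i \le F_1 \vee \dots \vee F_m$ for each $i$ and $F_j \le E_1 \vee \dots \vee E_l$ for each $j$; the first part of the lemma transfers each such containment across $\phi$, giving the equivalence at once. The only subtle point in the whole argument is the need to side-step the hypothesis $k \le n$ in Lemma \ref{lem 4.3}, but the maximal-independent-subset trick above handles this cleanly, so I do not expect a serious obstacle.
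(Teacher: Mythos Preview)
Your argument is correct and follows the same approach as the paper: translate $P \le Q_1 \vee \dots \vee Q_k$ into the rank identity $r(P \vee Q_1 \vee \dots \vee Q_k) = r(Q_1 \vee \dots \vee Q_k)$, invoke Lemma~\ref{lem 4.3}, and use $\phi^{-1}$ for the reverse direction; the ``in particular'' statement is then derived exactly as you do. The only difference is that the paper applies Lemma~\ref{lem 4.3} directly to the $(k+1)$-tuple $(P,Q_1,\dots,Q_k)$ without comment, whereas you explicitly work around the stated hypothesis $k \le n$ of Lemma~\ref{lem 4.3} via the maximal-independent-subset reduction. In fact the \emph{proof} of Lemma~\ref{lem 4.3} already goes through for arbitrary $k$ (one only needs $s = r(P_1 \vee \dots \vee P_k) \le n$, which is automatic), so the paper is implicitly relying on that; your case split makes this honest and is a perfectly reasonable way to write it.
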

\begin{proof}
Assume that $P \le Q_1 \vee Q_2 \vee \dots \vee Q_k$. Then $r(P \vee Q_1 \vee \dots \vee Q_k) = r(Q_1 \vee \dots \vee Q_k)$. It follows from Lemma \ref{lem 4.3} that
$$r(\phi(P) \vee \phi(Q_1) \vee \dots \vee \phi(Q_k)) = r(\phi(Q_1) \vee \dots \vee \phi(Q_k)),$$
which implies that $\phi(P) \le \phi(Q_1) \vee \phi(Q_2) \vee \dots \vee \phi(Q_k)$. The other direction is similar from Lemma \ref{lem 4.3}.

If $E_1 \vee E_2 \vee \dots \vee E_l = F_1 \vee F_2 \vee \dots \vee F_m$, then the previous argument implies that $\phi(E_i) \le \phi(F_1) \vee \phi(F_2) \vee \dots \vee \phi(F_m)$
for each $i \in \{1,2, \dots, m\}$ and $\phi(F_j) \le \phi(E_1) \vee \phi(E_2) \vee \dots \vee \phi(E_l)$ for each $j \in \{1, 2, \dots, m\}$. Hence
$\phi(E_1) \vee \phi(E_2) \vee \dots \vee \phi(E_l) = \phi(F_1) \vee \phi(F_2) \vee \dots \vee \phi(F_m)$. The other direction is also similar.
\end{proof}

{\bf Proof of Theorem \ref{main thm 2}.} $(2) \Rightarrow (1):$ We naturally extend $\phi$ to a bijective map $\psi: \mathcal P(\mathcal H) \to \mathcal P(\mathcal H)$ by
$$\psi(P)(\mathcal H) = \{\hat{f}(P \xi): \xi \in \mathcal H\}, \forall \ P \in \mathcal P(\mathcal H).$$
Clearly $\psi$ is bijective and it follows from Theorem \ref{main thm 1} that $\psi$ is joint spectrum preserving for any two projections. Hence by the arguments in Section 2 we have
that $\psi(P \vee Q) = \psi(P) \vee \psi(Q)$ for any $P, Q \in \mathcal P(\mathcal H)$. Clearly $\psi(I) = I$. It follows that for any $n$ rank one projections
$P_1, P_2, \dots, P_n \in \mathcal P(\mathcal H)$, $P_1 \vee P_2 \vee \dots \vee P_n = I$ if and only if $\psi(P_1) \vee \psi(P_2) \vee \dots \vee \psi(P_n)=I$. By Lemma \ref{lem 4.1},
$\phi$ preserves the joint spectrum of any $n$ rank one projections.

$(1) \Rightarrow (2):$ We define a map $\Psi: \mathcal P(\mathcal H) \to \mathcal P(\mathcal H)$ as follows: if $P = P_1 \vee P_2 \vee \dots \vee P_k$ with
$P_1, P_2, \dots, P_k \in \mathcal P_1(\mathcal H)$, then define $$\Psi(P) = \phi(P_1) \vee \phi(P_2) \vee \dots \vee \phi(P_k)$$
and let $\Psi(0)=0$. By Lemma \ref{lem 4.4}, $\Psi$ is well-defined and injective. Fix $R \in \mathcal P(\mathcal H)$ and let $s = r(R)$.
Then there exist $s$ rank one projections $R_1, R_2, \dots, R_s \in \mathcal P_1(\mathcal H)$ such that $R=R_1 \vee R_2 \vee \dots \vee R_s$.
Note that $\phi$ is bijective. Take $R' = \phi^{-1}(R_1) \vee \phi^{-1}(R_2) \vee \dots \vee \phi^{-1}(R_s)$ and it follows that $\Psi(R')=R$. Therefore $\Psi$ is bijective.
To show $\Psi$ is joint spectrum preserving for any two projections, we only need to prove (1) for any $P, Q \in \mathcal P(\mathcal H)$, $P \vee Q = I$ if and only if
$\Psi(P) \vee \Psi(Q) = I$ and (2) for any $P, Q \in \mathcal P(\mathcal H)$, $P \vee Q =I, P \wedge Q =0$ if and only if $\Psi(P) \vee \Psi(Q) =I,
\Psi(P) \wedge \Psi(Q) = 0$. By Lemma \ref{lem 4.2} and the definition of $\Psi$, the proof of (1) is obvious.
Now assume $P \vee Q =I, P \wedge Q =0$. It follows that $r(P)+r(Q)=n$ and $\Psi(P) \vee \Psi(Q) =I$. By Lemma \ref{lem 4.3} we have $r(\Psi(P))=r(P), r(\Psi(Q))=r(Q)$.
Hence $r(\Psi(P))+r(\Psi(Q))=n$. Therefore $\Psi(P) \wedge \Psi(Q)=0$. The rest follows directly from Theorem \ref{main thm 1}.

\subsection{Maps shrinking the joint spectrum of more than $n$ rank one projections}
Now we assume that $\phi: \mathcal P_1(\mathcal H) \to \mathcal P_1(\mathcal H)$ is a surjective map which shrinks the joint spectrum of $n+1$ projections.
Notice that $\phi$ also shrinks the joint spectrum of any $n$ rank one projections. A similar argument as in Lemma \ref{lem 4.2} gives that $\phi$ is bijective
 on $\mathcal P_1(\mathcal H)$. We follow a similar line as in Section 3 to show that $\phi$ preserves the orthogonality.

\begin{lemma} \label{lem 4.10}
Assume that $\phi: \mathcal P_1(\mathcal H) \to \mathcal P_1(\mathcal H)$ is a surjective map which shrinks the joint spectrum of any $n+1$ projections.
Then $\phi$ preserves the orthogonality.
\end{lemma}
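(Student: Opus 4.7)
The approach is to adapt the contradiction argument of Lemma \ref{lem 3.1} to the rank-one setting, replacing the identity $P + Q = I$ used there with a full orthogonal decomposition $I = \sum_{i=1}^n P_i$ into rank-one projections, and applying the shrinking hypothesis to the resulting $(n+1)$-tuple of rank-one projections.

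Concretely, suppose for contradiction that there exist orthogonal rank-one $P_1, P_2 \in \mathcal P_1(\mathcal H)$ with $\phi(P_1)\phi(P_2) \neq 0$. Let $\xi$ be the unit vector spanning $\phi(P_1)(\mathcal H)$, set $c = \|\phi(P_2)\xi\| > 0$ and $\lambda = (1 + 3c^2)/(1 + c^2) > 1$, and let $R$ be the rank-one projection onto $\mathbb C(\xi + \phi(P_2)\xi)$. The computation in Lemma \ref{lem 3.1} then yields $(\phi(P_1) + \phi(P_2) - \lambda R)\xi = 0$. I would next extend $P_1, P_2$ to mutually orthogonal rank-one projections $P_1, P_2, P_3, \ldots, P_n$ summing to $I$, chosen so that $\phi(P_i)(\mathcal H) \subseteq \xi^\perp$ for every $i \geq 3$. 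With this choice, $\bigl(\sum_{i=1}^n \phi(P_i) - \lambda R\bigr)\xi = 0$, so $(1,1,\ldots,1,-\lambda) \in \sigma([\phi(P_1),\ldots,\phi(P_n),R])$. By the shrinking hypothesis applied to this $(n+1)$-tuple, $(1,1,\ldots,1,-\lambda) \in \sigma([P_1,\ldots,P_n,\phi^{-1}(R)])$, which says $I - \lambda \phi^{-1}(R) = \sum_{i=1}^n P_i - \lambda \phi^{-1}(R)$ is not invertible. But $\phi^{-1}(R)$ is rank one and $\lambda > 1$, so $I - \lambda \phi^{-1}(R)$ has spectrum $\{1, 1 - \lambda\} \subset \mathbb C \setminus \{0\}$ and is therefore invertible --- a contradiction, forcing $\phi(P_1)\phi(P_2) = 0$.

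The main obstacle is the extension step: finding $P_3, \ldots, P_n$ orthonormal in $(P_1 \vee P_2)^\perp$ with $\phi(P_i)(\mathcal H) \subseteq \xi^\perp$, i.e., $\phi(P_i) \perp \phi(P_1)$. This is superficially circular, since it asks for orthogonality preservation for precisely the kind of pair the lemma is trying to handle. To break the circularity, I would first prove an auxiliary hyperplane-duality property for $\phi$: using the shrinking of $n$-joint spectra (inherited from the $(n+1)$-case by padding with a zero coefficient) together with Lemma \ref{lem 4.1} applied to the orthonormal $n$-tuple consisting of $P_\xi$ and a rank-one orthonormal basis of $\xi^\perp$, show that the set $\{T \in \mathcal P_1(\mathcal H) : \phi(T)(\mathcal H) \subseteq \xi^\perp\}$ equals $\mathcal P_1(V)$ for some hyperplane $V \subseteq \mathcal H$, and further that $(P_1 \vee P_2)^\perp \subseteq V$. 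Once this is in hand, any orthonormal rank-one basis of $(P_1 \vee P_2)^\perp$ supplies the required $P_3, \ldots, P_n$, and the contradiction above goes through.
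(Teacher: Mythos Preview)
Your contradiction setup and the computation $(\phi(P_1) + \phi(P_2) - \lambda R)\xi = 0$ with $\lambda = (1+3c^2)/(1+c^2) > 1$ are correct and match the paper. The divergence is in how to conclude that $\phi(P_1) + \phi(P_2) - \lambda R + \phi(P_3) + \cdots + \phi(P_n)$ is singular, and here your proposal has a genuine gap.

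You want to choose the orthonormal extension $P_3, \ldots, P_n \le (P_1 \vee P_2)^\perp$ so that $\phi(P_i) \perp \xi$. You correctly flag this as circular and propose a hyperplane-duality repair, but that repair does not go through with only the \emph{shrinking} hypothesis. From shrinking of $n$-tuples and Lemma~\ref{lem 4.1} you obtain only the forward implication $r(Q_1 \vee \cdots \vee Q_k) = k \Rightarrow r(\phi(Q_1) \vee \cdots \vee \phi(Q_k)) = k$ (equivalently $r(\phi(Q_1) \vee \cdots \vee \phi(Q_k)) \ge r(Q_1 \vee \cdots \vee Q_k)$), never the reverse inequality. So even if $\phi(T_2), \ldots, \phi(T_n)$ span the hyperplane $\xi^\perp$, you cannot deduce that $T_2, \ldots, T_n$ span a hyperplane, nor that $\{T : \phi(T) \le \xi^\perp\}$ has the form $\mathcal P_1(V)$. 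Worse, your second requirement $(P_1 \vee P_2)^\perp \subseteq V$ says precisely that $T \perp P_1 \Rightarrow \phi(T) \perp \phi(P_1)$ for such $T$, which is the very conclusion of the lemma.

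The paper sidesteps this with a rank argument that works for an \emph{arbitrary} orthonormal extension $P_3, \ldots, P_n$. Since $R \le \phi(P_1) \vee \phi(P_2)$, the operator $A := \phi(P_1) + \phi(P_2) - \lambda R$ has range contained in the two-dimensional space $(\phi(P_1) \vee \phi(P_2))(\mathcal H)$; combined with $A\xi = 0$ this forces the range projection of $A$ to have rank at most $1$. Hence the range of $A + \phi(P_3) + \cdots + \phi(P_n)$ lies in a subspace of dimension at most $1 + (n-2) = n-1 < n$, so this operator is singular regardless of where $\phi(P_3), \ldots, \phi(P_n)$ sit relative to $\xi$. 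The rest of the contradiction (invertibility of $I - \lambda \phi^{-1}(R)$ since $\lambda > 1$, and the shrinking hypothesis on the $(n+1)$-tuple) then proceeds exactly as you wrote.
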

\begin{proof}
By way of contradiction, assume that $P, Q \in \mathcal P_1(\mathcal H)$ such that $PQ=0$ and $\phi(P) \phi(Q) \ne 0$. Take a unit vector $\xi \in \phi(P) \mathcal H$
such that $\phi(Q) \xi \ne 0$. Take a rank one projection $R$ with $\xi + \phi(Q)\xi$ in its range. Let $c = \Vert \phi(Q) \xi \Vert > 0$.
It follows that
\begin{eqnarray}
(\phi(P) + \phi(Q) - \frac{1+3c^2}{1+c^2}R)\xi =0. \label{4.10.1}
\end{eqnarray}
Notice that $R \le \phi(P) \vee \phi(Q)$. We have that $Ran(\phi(P) + \phi(Q) - \frac{1+3c^2}{1+c^2}R) \le \phi(P) \vee \phi(Q)$,
where $Ran(\phi(P) + \phi(Q) - \frac{1+3c^2}{1+c^2}R)$ denotes the range projection of $\phi(P) + \phi(Q) - \frac{1+3c^2}{1+c^2}R$.
It follows from (\ref{4.10.1}) that $r(Ran(\phi(P) + \phi(Q) - \frac{1+3c^2}{1+c^2}R)) = 1$.  Take $P_3, P_4, \dots, P_n \in \mathcal P_1(\mathcal H)$ such that
$P + Q + P_3+ \dots + P_n =I$. Since $\frac{1+3c^2}{1+c^2}>1$, $P + Q - \frac{1+3c^2}{1+c^2}\phi^{-1}(R) + P_3+ \dots + P_n =I-\frac{1+3c^2}{1+c^2}\phi^{-1}(R)$ is invertible.
Since $r(Ran(\phi(P) + \phi(Q) - \frac{1+3c^2}{1+c^2}R)) = 1$, $r(Ran(\phi(P) + \phi(Q) - \frac{1+3c^2}{1+c^2}R) + \phi(P_3)+ \dots + \phi(P_n))) \le n-1$ and thus
 $\phi(P) + \phi(Q) - \frac{1+3c^2}{1+c^2}R + \phi(P_3)+ \dots + \phi(P_n)$ is not invertible. We obtain a contradiction.
\end{proof}

\begin{lemma} \label{lem 4.11}
Assume that $Q_1, Q_2, \dots, Q_k$ are mutually orthogonal projections in $\mathcal P_1 (\mathcal H)$ and $P \in \mathcal P_1(\mathcal H)$ with $P \le Q_1 + Q_2 + \dots + Q_k$.
Then $\phi(P) \le \phi(Q_1)+ \phi(Q_2) + \dots + \phi(Q_k)$. Moreover, if $R_1, R_2, \dots, R_k$ are mutually orthogonal with $R_1 + R_2 + \dots + R_k = Q_1 + Q_2 + \dots + Q_k$,
then $\phi(R_1)+ \phi(R_2) + \dots + \phi(R_k)=\phi(Q_1)+ \phi(Q_2) + \dots + \phi(Q_k)$.
\end{lemma}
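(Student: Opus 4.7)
The plan is to reduce to the extremal case $k = n-1$ via a matrix-determinant calculation, and then obtain the general $k$ from an orthogonality argument. The case $k = n$ is trivial since $Q_1 + \cdots + Q_n = I$, so assume $1 \le k \le n-1$. First I would extend $Q_1, \ldots, Q_k$ to mutually orthogonal rank one projections $Q_1, \ldots, Q_n$ summing to $I$; by Lemma \ref{lem 4.10}, $\phi(Q_1), \ldots, \phi(Q_n)$ are then mutually orthogonal rank one projections and thus also sum to $I$. Set $E = \phi(Q_1) + \cdots + \phi(Q_k)$.

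Next I would handle $k = n-1$. Since $P \le Q_1 + \cdots + Q_{n-1}$, we have $Q_n P = 0$, so for every $t \in \mathbb C$ the range of $Q_n$ lies in $\ker(tP - (Q_1 + \cdots + Q_{n-1}))$. Hence $(t, -1, \ldots, -1, 0) \in \sigma([P, Q_1, \ldots, Q_n])$ for every $t$, and by the shrinking hypothesis for $n+1$ projections, $t\phi(P) - E$ is singular for all $t \in \mathbb C$. Writing $\phi(P) = \eta\eta^*$ with $\|\eta\| = 1$, the matrix determinant lemma gives
\[
\det(t\phi(P) - E) = \det(-E) + t\,\eta^*\operatorname{adj}(-E)\eta = (-1)^{n-1}\, t\,\|\phi(Q_n)\eta\|^2,
\]
since $-E$ has rank $n-1$ with one-dimensional kernel $\phi(Q_n)(\mathcal H)$, so $\operatorname{adj}(-E) = (-1)^{n-1}\phi(Q_n)$. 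Requiring this polynomial to vanish identically in $t$ forces $\phi(Q_n)\eta = 0$, and hence $\phi(P) \le I - \phi(Q_n) = E$.

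For general $1 \le k \le n-1$, I would reduce to the $k = n-1$ case. For each $j \in \{k+1, \ldots, n\}$, the $n-1$ projections $\{Q_i : i \ne j\}$ are mutually orthogonal rank one and sum to $I - Q_j$, and since $Q_j P = 0$, $P \le I - Q_j$. Applying the $k = n-1$ case to this family yields $\phi(P) \le I - \phi(Q_j)$, i.e., $\phi(P)\phi(Q_j) = 0$. Running this argument for every $j \in \{k+1,\ldots,n\}$ gives $\phi(P) \perp \sum_{j > k}\phi(Q_j) = I - E$, so $\phi(P) \le E = \phi(Q_1) + \cdots + \phi(Q_k)$, as required.

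The second assertion then follows immediately: applying the first part to each $R_i$ (using $R_i \le R_1 + \cdots + R_k = Q_1 + \cdots + Q_k$) gives $\phi(R_i) \le \phi(Q_1) + \cdots + \phi(Q_k)$, and since $\phi(R_1), \ldots, \phi(R_k)$ are mutually orthogonal rank one projections by Lemma \ref{lem 4.10}, $\sum \phi(R_i)$ is a rank $k$ subprojection of the rank $k$ projection $\sum \phi(Q_j)$, so the two coincide. The main obstacle is the $k = n-1$ computation: it is essential that $E$ have codimension one so $\operatorname{adj}(-E) \ne 0$; this is what allows the polynomial identity $\det(t\phi(P) - E) \equiv 0$ in $t$ to encode the genuine geometric constraint $\phi(P) \le E$. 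A direct attempt for $k \le n-2$ fails because then $\operatorname{adj}(-E) = 0$ and the vanishing of the determinant carries no information.
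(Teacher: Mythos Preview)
Your $k=n-1$ argument uses the spectrum-shrinking inclusion in the wrong direction. Shrinking means $\sigma([\phi(P),\phi(Q_1),\ldots,\phi(Q_n)]) \subseteq \sigma([P,Q_1,\ldots,Q_n])$, so knowing that $(t,-1,\ldots,-1,0)$ lies in the \emph{right-hand} spectrum tells you nothing about the left-hand one. You therefore cannot conclude that $t\phi(P)-E$ is singular for every $t$, and the matrix-determinant computation (which is correct as algebra) never receives the input it needs. The whole $k=n-1$ step, and hence the reduction built on it, collapses.

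The paper's proof bypasses all of this with a one-line application of the very lemma you already cited. You used Lemma~\ref{lem 4.10} to see that $\phi(Q_1),\ldots,\phi(Q_n)$ are mutually orthogonal; the same lemma applies verbatim to the pair $P,Q_j$ for each $j\in\{k+1,\ldots,n\}$. Since $P\le Q_1+\cdots+Q_k$ forces $PQ_j=0$, Lemma~\ref{lem 4.10} gives $\phi(P)\phi(Q_j)=0$ directly, and hence
\[
\phi(P)\ \le\ I-\sum_{j>k}\phi(Q_j)\ =\ \sum_{i\le k}\phi(Q_i).
\]
No special case $k=n-1$, no determinant identity, and no reduction are required. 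Your derivation of the second assertion from the first matches the paper's and is correct.
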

\begin{proof}
Take $Q_{k+1}, Q_{k+2}, \dots, Q_n \in \mathcal P_1(\mathcal H)$ with $Q_1 + Q_2 + \dots + Q_n = I$. It follows from Lemma \ref{lem 4.10} that
$\phi(Q_1), \dots, \phi(Q_k), \phi(Q_{k+1}), \dots, \phi(Q_n)$ are mutually orthogonal with sum $I$. Notice that for each $i \in \{ k+1, k+2, \dots, n \}$, $PQ_i =0$.
By Lemma \ref{lem 4.10} again, $\phi(P) \phi(Q_i) = 0, \ \forall \ i \in \{ k+1, k+2, \dots, n\}$. Hence $\phi(P) \le \phi(Q_1)+ \phi(Q_2)+ \dots + \phi(Q_k)$.
If $R_1, R_2, \dots, R_k$ are mutually orthogonal with $R_1 + R_2 + \dots + R_k = Q_1 + Q_2 + \dots + Q_k$, then we get $\phi(R_i) \le \phi(Q_1)+ \phi(Q_2)+ \dots + \phi(Q_k)$ and
 $\phi(Q_i) \le \phi(R_1)+ \phi(R_2)+ \dots + \phi(R_k)$ for each $i \in \{ 1, 2, \dots, k \}$.
 Hence $\phi(R_1)+ \phi(R_2) + \dots + \phi(R_k)=\phi(Q_1)+ \phi(Q_2) + \dots + \phi(Q_k)$.
\end{proof}

Now we can get the main result of this subsection.

\vspace{0.2cm}

{\bf Proof of Theorem \ref{thm 4.14}.} It is clear that $(5) \Rightarrow (4) \Rightarrow (3) \Rightarrow (1)$ and $(4) \Rightarrow (2) \Rightarrow (1)$. In the following we only need to verify $(1) \Rightarrow (5)$.

We define a map $\Psi: \mathcal P(\mathcal H) \to \mathcal P(\mathcal H)$ as follows, $\Psi(0)=0$ and $\Psi(P)=\phi(P_1)+\phi(P_2)+\cdots +\phi(P_k)$ when $P \in \mathcal P(\mathcal H)$ and $P = P_1 + P_2 + \dots +P_k$ with
$P_1, P_2, \dots, P_k \in \mathcal P_1(\mathcal H)$. By Lemma \ref{lem 4.10} and Lemma \ref{lem 4.11}, $\Psi$ is well defined such that $r(\Psi(P))= r(P)$
for any $P \in \mathcal P(\mathcal H)$. Since $\phi$ shrinks the joint spectrum of any $n+1$ rank one projections, it also
shrinks the joint spectrum of any $n$ rank one projections. A similar argument as in the proofs of Lemma \ref{lem 4.2} and Lemma \ref{lem 4.3} can also imply
$r(\phi(Q_1) \vee \phi(Q_2) \vee \dots \vee \phi(Q_s)) \ge r(Q_1 \vee Q_2 \vee \dots \vee Q_s), \ \forall \ Q_1, Q_2, \dots, Q_s \in \mathcal P_1(\mathcal H)$. Hence
if $P = E_1 \vee E_2 \vee \dots \vee E_l$ with $E_1, E_2, \dots, E_l \in \mathcal P_1 (\mathcal H)$, then
$$\Psi(P) = \phi(E_1) \vee \phi(E_2) \vee \dots \vee \phi(E_l),$$
which implies that $\Psi(P \vee Q)= \Psi(P) \vee \Psi(Q)$. A similar argument as in the proof of Theorem \ref{main thm 2} gives that $\Psi$ is bijective and preserves the
joint spectrum of any two projections. The rest follows from the same line as in the proofs of Lemma \ref{lem 3.1} and Theorem \ref{thm 3.3}.

\begin{remark}
Notice that the method used in this subsection can not be applied to a surjective map on $\mathcal P_1(\mathcal H)$ which is joint spectrum shrinking for any $n$ rank one projections.
Indeed, even if a surjective map on $\mathcal P_1(\mathcal H)$ is joint spectrum preserving for any $n$ rank one projections, we can take $\beta_1, \beta_2, \dots, \beta_n$ in
Theorem \ref{main thm 2} to be non-orthonormal so that $\phi$ will not preserve the orthogonality. A further question is, for a surjective map on $\mathcal P_1(\mathcal H)$,
whether joint spectrum shrinking for any $n$ rank one projections implies joint spectrum preserving for any $n$ rank one projections.
\end{remark}

\end{document}